\date{}
 \newtheorem{thm}{Theorem}[section]
 \newtheorem{prop}[thm]{Proposition}
 \newtheorem{lemma}[thm]{Lemma}
 \newtheorem{cor}[thm]{Corollary}
\newtheorem{question}[thm]{Question}
 \theoremstyle{definition}
 \newtheorem{remark}[thm]{Remark}
\numberwithin{equation}{section}
\newcommand{\bbC}{{\mathbb{C}}}
\newcommand{\bbH}{{\mathbb{H}}}
\newcommand{\bbR}{{\mathbb{R}}}
\newcommand{\bbP}{{\mathbb{P}}}
\newcommand{\bbQ}{{\mathbb{Q}}}
\newcommand{\bbZ}{{\mathbb{Z}}}
\newcommand{\bbk}{{\Bbbk}}
\newcommand{\Aut}{\operatorname{Aut}}
\newcommand{\st}{\operatorname{st}}
\newcommand{\rank}{\operatorname{rank}}
\newcommand{\id}{\operatorname{id}}
\newcommand{\Pic}{\operatorname{Pic}}
\newcommand{\half}{\frac{1}{2}}
\newcommand{\cha}{\operatorname{char}}
\newcommand{\bsm}{\left(\begin{smallmatrix}}
\newcommand{\esm}{\end{smallmatrix}\right)}
\newcommand{\la}{\langle}
\newcommand{\ra}{\rangle}
\newcommand{\da}{\dasharrow }
\newcommand{\calR}{\mathcal{R}}
\newcommand{\calS}{\mathcal{S}}
\newcommand{\calX}{\mathcal{X}}
\newcommand{\calD}{\mathcal{D}}
\newcommand{\frakS}{\mathfrak{S}}
\newcommand{\sfE}{\mathsf{E}}
\newcommand{\Or}{\operatorname{O}}
\newcommand{\Num}{\operatorname{Num}}
\newcommand{\UC}{\operatorname{UC}}
\newcommand{\beq}{\begin{equation}}
\newcommand{\eeq}{\end{equation}}
\begin{document}

\title{Salem numbers and  Enriques surfaces}
\author{}
\author{Igor Dolgachev}
\maketitle

\ifdraft{\thispagestyle{fancy}}{}
\begin{abstract} It is known that the dynamical degree of an automorphism $g$ of an algebraic surface $S$  is lower semi-continuous when $(S,g)$ varies in an algebraic family. In this paper we report on computer experiments confirming this behavior with the aim to realize small Salem numbers as the dynamical degrees of automorphisms of Enriques surfaces or rational Coble surfaces. 
\end{abstract}

\section{Introduction} Let $X$ be a smooth projective algebraic surface over an algebraically closed field $\bbk$ and $\Num(X)$ be the numerical lattice of $X$, the quotient of the Picard group $\Pic(X)$ modulo numerical equivalence.\footnote{Over $\bbC$, the group $\Num(X)$ is isomorphic to the subgroup of $H^2(X,\bbZ)$ modulo torsion generated by algebraic cycles.}  An automorphism $g$ of $X$ acts on $\Num(X)$ preserving the inner product defined by the intersection product on $\Num(X)$. It is known that the characteristic polynomial of $g^*:\Num(X)\to \Num(X)$ is the product of cyclotomic polynomials and at most one \emph{Salem polynomial}, a monic irreducible reciprocal polynomial in $\bbZ[x]$ which has two reciprocal positive roots and all other roots are complex numbers of absolute value  one (see \cite{McMul1}). 

The spectral radius $\lambda(g)$ of $g^*$, i.e. the eigenvalue of $g^*$ on $\Num(X)_\bbC$ with maximal absolute value, is equal to 1 or the real eigenvalue  larger than 1. The number $\lambda(g)$ is called the \emph{dynamical degree} of $g$. 
It expresses the growth of the degrees of iterates $g^n$ of $g$. More precisely, we have \cite{Cantat} 
$$\lambda(g) = \lim_{n\to \infty}(\deg_hg^n)^{1/n},$$
where $\deg_hg^n = (g^*)^{-n}(h)\cdot h$ is the \emph{degree} of $(g^*)^{-n}(h)$ with respect to the numerical class of an ample divisor $h$ on $X$. The dynamical degree of $g$ does not depend on a choice of $h$. 

An automorphism $g$ is called \emph{hyperbolic} if $\lambda(g) > 1$. Equivalently, in the action of $g^*$ on the hyperbolic space associated with the real Minkowski space $\Num(X)_\bbR$, $g^*$ is a hyperbolic isometry. Its two fixed points lying in the boundary correspond to the eigenvectors of $g^*$ with eigenvalues $\lambda(g)$ and $1/\lambda(g)$. The isometry $g^*$ acts on the geodesic with ends at the fixed points as a hyperbolic translation with the hyperbolic distance $\lambda(g)$.  It is known that, over $\bbC$,  $\log(\lambda(g))$ is equal to the \emph{topological entropy} of the automorphisms $g$ acting on the set of complex points equipped with the euclidean topology. 

If $\lambda(g) = 1$, then the isometry $g^*$ is elliptic or  parabolic. In the former case, $g$ is an automorphism with some power lying in the connected component of the identity of the automorphism group of $X$. In this case $\deg_hg^n$ is bounded.  In the latter case $\deg_hg^n$ grows linearly or quadratically and $g$ preserves a pencil of rational or arithmetic  genus one curves on $X$ (see \cite{Cantat2}, \cite{Gizatullin}).

Going through  the classification of algebraic surfaces, one easily checks that a hyperbolic automorphism can be realized only on surfaces birationally isomorphic to an abelian surface, a K3 surface,  an Enriques surface, or the projective plane (see \cite{Cantat2}). 

The smallest known Salem number is the \emph{Lehmer number} $\alpha_{\textrm{Leh}}$ with the minimal  polynomial
$$x^{10}+x^9-(x^3+\cdots+x^7)+x+1.$$
It is equal to $1.17628...$. It is conjectured that this is indeed the smallest Salem number. In fact, it is the smallest possible dynamical degree of an automorphism of an algebraic surface \cite{McMul3}.  The Lehmer number can be realized as the dynamical degree of an automorphism of a rational surface or a K3 surface (see \cite{McMul3}, \cite{McMul4}). On the other hand, it is known that it cannot be realized on an Enriques surface \cite{Oguiso}.

In this paper we attempt to construct hyperbolic automorphisms of Enriques surfaces of small  dynamical degree. We succeeded in realizing the second smallest Salem number of degree 2 and the third smallest Salem number in degree 4. However, we believe that our smallest Salem numbers of degrees 6,8 and 10 are far from being  minimal, so  the paper should be viewed as a computer experiment. 

The main idea for the search of hyperbolic automorphisms of small dynamical degree is based on the following  nice result of Junyi Xie \cite{Xie} that roughly says that the dynamical degree of an automorphism does not increase when the surface   together with the automorphism is specialized in an algebraic family. More precisely, we have the following theorem.

\begin{thm}\label{xie} Let $f:\calX\to T$ be a smooth projective family of surfaces over an integral scheme $T$, and $g$ be an  automorphism of $\calX/T$. Let $g_t$ denote the restriction of $g$ to a fiber $\calX_t = f^{-1}(t)$. Then the function $\phi:t\mapsto \lambda(g_t)$ is lower semi-continuous (i.e. for any real number $r$, the set $\{t\in T:\phi(t)\le r$ is closed).
\end{thm}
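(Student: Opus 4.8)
The plan is to reduce to the case of a trait, prove the pointwise inequality $\lambda(g_s)\le\lambda(g_\eta)$ for a specialization $\eta\rightsquigarrow s$, and then upgrade this to closedness of $\{\phi\le r\}$. I would first recall that a constructible subset of a scheme is closed iff it is stable under specialization, so it suffices to show that $\phi(x_0)\le\phi(x_1)$ whenever $x_0$ lies in the closure of $\{x_1\}$ (plus a constructibility statement, handled at the end). Any such specialization is dominated by a morphism $\Spec R\to T$ from a discrete valuation ring $R$ carrying the generic point $\eta$ to $x_1$ and the closed point $s$ to $x_0$; pulling back $(\calX,g)$ along it reduces everything to $T=\Spec R$, and passing to geometric fibers we may work with the lattices $\Num(\calX_\eta)$ and $\Num(\calX_s)$.

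Over the trait I would invoke the specialization homomorphism on Néron--Severi groups: taking the Zariski closure in $\calX$ of a divisor class on $\calX_\eta$ and restricting to $\calX_s$ defines a map $sp\colon\Num(\calX_\eta)\to\Num(\calX_s)$ that is injective, compatible with the intersection pairing, and---since $g$ is a global automorphism of $\calX/T$, so that $g(\overline{D})=\overline{g(D)}$---equivariant for $g^*$. Writing $V\subseteq\Num(\calX_s)$ for its image, $V$ is a $g_s^*$-invariant sublattice on which $g_s^*$ acts as $g_\eta^*$, so the spectral radius of $g_s^*|_V$ is exactly $\lambda(g_\eta)$. The relatively ample class on $\calX/T$ specializes to a class $a:=[\calL_s]\in V$ that is ample on $\calX_s$ and, crucially, lies in the image $V$.

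The conceptual core is a growth estimate. We may assume $\lambda_s:=\lambda(g_s)>1$, so $g_s^*$ is a hyperbolic isometry of $\Num(\calX_s)_\bbR$ with isotropic eigenvectors $\theta^+,\theta^-$ for the eigenvalues $\lambda_s,\lambda_s^{-1}$. Since $g_s$ and $g_s^{-1}$ preserve the nef cone, $\theta^+$ and $\theta^-$ arise as limits of ample classes and are therefore nef and nonzero, so ampleness of $a$ together with the Hodge index theorem gives $a\cdot\theta^+>0$ and $a\cdot\theta^->0$. Expanding $a=c^+\theta^++c^-\theta^-+m$ with $m$ orthogonal to $\theta^\pm$, the identity $a\cdot\theta^+=c^-(\theta^-\cdot\theta^+)$ forces $c^-\neq 0$, whence
$$(g_s^*)^{-n}(a)\cdot a=c^-\lambda_s^{\,n}(\theta^-\cdot a)+o(\lambda_s^{\,n})$$
grows exactly like $\lambda_s^{\,n}$. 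On the other hand $a\in V$ and $V$ is $g_s^*$-invariant, so $(g_s^*)^{-n}(a)$ stays in $V$, where its norm grows at most like $\lambda(g_\eta)^{\,n}$ up to a subexponential factor, the spectral radius of $(g_s^*)^{-1}|_V=(g_\eta^*)^{-1}$ being $\lambda(g_\eta)$. Comparing the two estimates and taking $n$-th roots gives $\lambda_s\le\lambda(g_\eta)$, i.e. $\phi(s)\le\phi(\eta)$. (This is the bound complementary to the reverse inequality coming from $V\subseteq\Num(\calX_s)$; together they show $\phi$ is in fact constant along the trait.)

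Finally, to deduce that $\{\phi\le r\}$ is closed I would establish its constructibility: spreading out, $T$ admits a finite stratification into locally closed pieces over each of which the pair $(\Num(\calX_t),g_t^*)$ is locally constant, so $\phi$ is constant on each stratum and $\{\phi\le r\}$ is a union of strata, hence constructible; a constructible specialization-stable subset is closed. I expect the main obstacle to be precisely the second and fourth steps rather than the growth estimate: making the specialization map isometric and $g^*$-equivariant in arbitrary characteristic---where controlling how $\Num$ jumps typically requires $\ell$-adic (or crystalline) comparison of the fibers---and producing the stratification that trivializes $(\Num(\calX_t),g_t^*)$ are the genuinely technical points, whereas the observation that the ample class inherited from the generic fiber traps the backward orbit inside $V$ and thereby caps $\lambda(g_s)$ is what drives the semicontinuity.
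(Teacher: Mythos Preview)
The paper does not prove this theorem: it is attributed to Xie \cite{Xie} and stated without argument, so there is no proof in the paper to compare against.

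Your outline is correct and follows the natural route via the specialization homomorphism on $\Num$. As you yourself note parenthetically, for a global biregular automorphism the argument in fact yields $\lambda(g_s)=\lambda(g_\eta)$: the relatively ample class $a$ lies in the $g_s^*$-invariant isometric copy $V\cong\Num(\calX_\eta)$, so the degree sequences $(g_s^*)^n(a)\cdot a$ and $(g_\eta^*)^n(a_\eta)\cdot a_\eta$ agree term by term, not merely asymptotically. Over an integral base the dynamical degree of a global automorphism is therefore constant, and your final constructibility step is superfluous in this setting. The genuine semicontinuity content of Xie's result lies in the birational case, where indeterminacy and contraction on the special fibre can make the inequality strict; that is also closer to what the paper's experiments actually illustrate, since the double-plane involutions $g_{ij}$ there are redefined on each fibre by the geometry of the linear system $|2F_i+2F_j|$ rather than obtained as restrictions of a single automorphism of the total space.
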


One can use this result, for example, when $\pi$ is a family of lattice polarized K3 surfaces. We use this result in the case of a family of Enriques surfaces when a general fiber has no smooth rational curves but the special fibers acquire them. It shows  that the dynamical degree of an automorphism depends on  the Nikulin nodal invariant of the surface (see \cite{Dolgachev1}).

\textbf{Acknowledgement}. This paper owes much to the conversations with  Paul Reschke whose joint work with Bar Roytman makes a similar experiment with automorphisms of K3 surfaces isomorphic to a hypersurface in the product $\bbP^1\times \bbP^1\times \bbP^1$. I am also grateful to Keiji Oguiso and Xun Yu for useful remarks on my talk on this topic during the ''Conference on K3 surfaces and related topics'' held in Seoul on November 16-20, 2015. I thank  the referee whose numerous  critical comments have greatly improved the exposition of the paper. 

\section{Double plane involutions}
\subsection{Enriques surfaces and the lattice $\sfE_{10}$} Let $S$ be an Enriques surface, i.e. a smooth projective algebraic surface with canonical class $K_S$ satisfying $2K_S = 0$ and the second Betti number (computed in \'etale cohomology if  $\bbk\ne \bbC$) equal to 10.  Together with K3 surfaces, abelian surfaces and bielliptic surfaces, Enriques surfaces make the class of algebraic surfaces with numerically trivial canonical class. If $\bbk = \bbC$, the universal cover of an Enriques surface is a K3 surface and $S$ becomes isomorphic to the quotient of a K3 surface by a fixed-point-free involution.

Let us remind some known facts about Enriques surfaces which can be found in many sources (e.g.\cite{Cossec}, or  \cite{Dolgachev1} and references therein). It is known that $\Num(S)$ is an even unimodular lattice of signature $(1,9)$, and, as such, it is isomorphic to the quadratic lattice $\sfE_{10}$ equal to the orthogonal sum of the negative definite even unimodular lattice $\sfE_8$ of rank 8 (defined by the negative of the Cartan matrix of a simple root system of type $E_8$) and the unimodular even rank 2 lattice  $U$ defined by the matrix $\left(\begin{smallmatrix}0&1\\
1&0\end{smallmatrix}\right)$. The lattice $\sfE_{10}$ is called in \cite{Cossec} the \emph{Enriques lattice}. One can choose a basis $(f_1,\ldots,f_9,\delta)$ in $\Num(S)$ formed by isotropic vectors $f_1,\ldots,f_9$ and a vector $\delta$ with 
$$\delta^2 = 10,\  (\delta,f_i) = 3,\  (f_i,f_j) = 1, i\ne j.$$
Together with the vector 
$$f_{10} = 3\delta-f_1-\cdots-f_9,$$
the ordered set $(f_1,\ldots,f_{10})$ forms a  $10$-sequence  of isotropic vectors with $(f_i,f_j) = 1, i\ne j$. We say that the isotropic sequence $(f_1,\ldots,f_{10})$ is \emph{non-degenerate}, if each $f_i$ is equal to the numerical class of a nef divisor $F_i$. In the case of Enriques surfaces this means that the intersection of $F_i$ with  any smooth rational curve is non-negative.  
Under this assumption, $\delta$ is the numerical class of an ample  divisor $\Delta$. The linear system $|\Delta|$ defines a closed embedding of $S$ in $\bbP^5$ with the image a surface of degree 10, called a \emph{Fano model} of $S$.

The orthogonal group of the lattice $\sfE_{10}$ contains a subgroup of index 2 which is generated by reflections $s_{\alpha}:x\mapsto x+(x,\alpha)\alpha$ in vectors $\alpha$ with $\alpha^2 = -2$. This group is a Coxeter group with generators $s_{\alpha_i}$, where 
$$\alpha_0 = \delta-f_1-f_2-f_3,\ \alpha_1 = f_1-f_2,\ldots, \alpha_9 = f_9-f_{10}.$$
Its Coxeter diagram is of $T$-shaped type 
\begin{figure}[h]
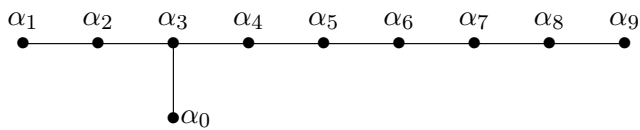

\xy (-40,10)*{};
@={(0,0),(10,0),(20,0),(30,0),(40,0),(50,0),(60,0),(70,0),(80,0),(20,-10)}@@{*{\bullet}};
(0,0)*{};(80,0)*{}**\dir{-};
(20,0)*{};(20,-10)*{}**\dir{-};
(0,3)*{\alpha_1};(10,3)*{\alpha_2};(20,3)*{\alpha_3};(30,3)*{\alpha_4};(40,3)*{\alpha_5};(50,3)*{\alpha_6};(60,3)*{\alpha_7};
(70,3)*{\alpha_8};(80,3)*{\alpha_9};(23,-10)*{\alpha_0};
\endxy
\caption{The Enriques lattice $\sfE_{10}$ and the Coxeter diagram of its reflection group}\label{enriqueslattice}
\end{figure}

\subsection{Elliptic fibrations and double plane involutions}\label{ss2.2}
We have $\Num(X) \cong \Pic(X)/\bbZ K_S$, so there are two choices for $F_i$ if $K_S\ne 0$ and one choice if $K_S = 0$ (the latter may happen only if the characteristic $\cha(\bbk)$ of $\bbk$ is equal to 2). The linear system $|2F_i|$ defines a genus one fibration $p_i:S\to \bbP^1$ on $S$, an elliptic fibration if $\cha(\bbk)\ne 2$.  The fibers $2F_i$ are its  double fibers, the other double fiber is $2F_i'$, where  $F_i'$ is linearly equivalent to $F_i+K_S$.

 The linear system $|2F_i+2F_j|$ of effective divisors linearly equivalent to $2F_i+2F_j$ defines a degree $2$ morphism 
 $$\phi_{ij}:S\to \calD \subset \bbP^4$$
 onto a  del Pezzo surface $\calD$ of degree $4$. If $p\ne 2$, it has four nodes and it is isomorphic to the quotient of $\bbP^1\times \bbP^1$ by an involution with four isolated fixed points. Let $g_{ij}$ be the birational involution of $S$ defined by the deck transformation. Since $S$ is a minimal surface with nef canonical class, it extends to a biregular involution of $S$. We call it a \emph{double plane involution}, a birational equivalent model of the map $\phi_{ij}$ is the original Enriques's double plane construction of Enriques surfaces. 

The natural action of $\Aut(S)$ on $\Num(S)$ defines a homomorphism  
$$\rho:\Aut(S)\to \Or(\sfE_{10}), \ g\mapsto g^*.$$
Its image is contained in the reflection group $W(\sfE_{10})$.   It is known that the kernel of $\rho$ is a finite group of order $2$ or $4$ (see \cite{DolgachevNum},\cite{Mukai},\cite{Mukai2}).  Over $\bbC$ they have been classified in \cite{Mukai2}.  None of them occur in our computations. So we may assume that $\rho$  is injective.

When $S$ is \emph{unnodal}, i.e. it does not contain smooth rational curves\footnote{We call such curves $(-2)$-curves because they are characterized by the property that their self-intersection is equal to $-2$.}, the image $\Aut(S)^*$ of $\rho$ contains a subgroup of finite index of $W(\sfE_{10})$ that coincides with the 2-level congruence subgroup $W(\sfE_{10})(2): = \{\sigma:\half(\sigma(x)-x)\in \Num(S),\  \textrm{for all $x\in \sfE_{10}$}\}$.
It is known that, in this case $g_{ij}$ acts on $\sfE_{10}$ as $-\id_{\sfE_8}\oplus \id_U$ for some orthogonal decomposition $\sfE_{10} =\sfE_8\oplus U$. The subgroup $W(\sfE_{10})(2)$ coincides with the normal closure of any $g_{ij}^*$.

Let $(f_1,\ldots,f_{10})$ be a non-degenerate isotropic $10$-sequence. Consider the degree 2 cover $\phi_{ij}:S\to \calD$ corresponding to a pair 
$(f_i,f_j)$. The map $\phi_{ij}$ blows down common rational irreducible components of fibers of the genus one fibrations $|2F_i|$ and $|2F_j|$. The classes of these components span a negative definite sublattice $\calR_{ij}$ of $\Num(S)$. It is a negative definite lattice spanned by vectors with norm equal to $-2$, the orthogonal sum of root lattices of simple Lie algebras of types  $A_n,D_n,E_n$. The action of the involution $g_{ij}$ on this lattice is given in the following lemma (see \cite{Shimada}, section 3).

\begin{lemma}\label{shimada} Assume $p\ne 2$ and let $X$ be a smooth minimal projective surface of non-negative Kodaira dimension. Let $f:X\to Y$ be a morphism of degree 2 onto a normal surface.  Then any connected  fiber $C = f^{-1}(y)$ over a nonsingular point of $Y$ is a point or the union of $(-2)$-curves whose intersection graph is of type $A_n,D_n,\sfE_n$ as in the following picture. 

$$
\xy (-20,-20)*{};
(-10,0)*{A_n};@={(0,0),(10,0),(20,0),(40,0),(50,0)}@@{*{\bullet}};
(0,0)*{};(25,0)*{}**\dir{-};(35,0)*{};(50,0)*{}**\dir{-};(30,0)*{\ldots};
(0,3)*{a_1};(10,3)*{a_2};(20,3)*{a_3};(40,3)*{a_{n-1}};(50,3)*{a_n};
(-10,-10)*{D_n};@={(0,-10),(10,-10),(20,-10),(40,-10),(50,-10),(10,-17)}@@{*{\bullet}};
(0,-10)*{};(25,-10)*{}**\dir{-};(35,-10)*{};(50,-10)*{}**\dir{-};(30,-10)*{\ldots};
(10,-10)*{};(10,-17)*{}**\dir{-};
(0,-7)*{d_2};(13,-17)*{d_1};(12,-7)*{d_3};(20,-7)*{d_4};(40,-7)*{d_{n-1}};(50,-7)*{d_n};
(-10,-25)*{\sfE_n};@={(0,-25),(10,-25),(20,-25),(30,-25),(50,-25),(60,-25),(20,-32)}@@{*{\bullet}};
(0,-25)*{};(35,-25)*{}**\dir{-};(45,-25)*{};(60,-25)*{}**\dir{-};(40,-25)*{\ldots};
(20,-25)*{};(20,-32)*{}**\dir{-};
(0,-22)*{e_2};(10,-22)*{e_3};(22,-22)*{e_4};(23,-32)*{e_1};(30,-22)*{e_5};(50,-22)*{e_{n-1}};(60,-22)*{\sfE_n};
\endxy
$$
The deck transformation $\sigma$ of $f$ extends to a biregular automorphism of $X$. It acts on the components of $C$ as follows
\begin{itemize}
\item $\sigma(a_i) = a_{n+1-i}, i = 1,\ldots,n$;
\item $\sigma(d_i) = d_i$ if $n$ is even;
\item $\sigma(d_1)=d_2, \sigma(d_i) = d_i, i\ne 1,2$ if $n$ is odd;
\item $\sigma(e_1) = e_1, \sigma(e_i) = e_{8-i}, i\ne 1,$ if $n = 6$;
\item $\sigma(e_i) = e_i$ if $n = 7,8$.
\end{itemize}
\end{lemma}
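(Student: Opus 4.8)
The plan is to localize the statement near $y$, replace $f$ by the finite double cover coming from its Stein factorization, and then read the conclusion off the classification of rational double points together with the explicit shape of a double cover when $p\ne 2$. First I would produce $\sigma$ and make it biregular. Over the generic point $K(X)/K(Y)$ is a separable quadratic, hence Galois, extension, whose nontrivial automorphism defines a birational involution $\sigma\colon X\dashrightarrow X$ over $Y$ with $f\circ\sigma=f$. Since $X$ is smooth, minimal and of non-negative Kodaira dimension, it is the unique minimal model in its birational class, so every birational self-map of $X$ is biregular; hence $\sigma\in\Aut(X)$. From $f\circ\sigma=f$ it permutes the fibres of $f$, and in particular $\sigma(C)=C$.

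Next I would describe $C$ locally. Shrinking $Y$ to an affine neighbourhood of the nonsingular point $y$, form the Stein factorization $f=\bar f\circ\pi$, with $\pi\colon X\to\bar X$ having connected fibres and $\bar f\colon\bar X\to Y$ finite of degree $2$. Then $\bar X$ is normal, $\pi$ is birational, and — $X$ carrying no $(-1)$-curves — $\pi$ is forced to be the minimal resolution of $\bar X$. If $y$ lies off the branch locus of $\bar f$, then $\bar f^{-1}(y)$ consists of two reduced smooth points and $C$ is a single point; otherwise $\bar f^{-1}(y)=\{\bar y\}$ and $C=\pi^{-1}(\bar y)$ is the exceptional locus over $\bar y$, whose components span a negative-definite sublattice of $\Num(X)$ because they are contracted to a point. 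As $p\ne 2$, near $\bar y$ the cover has the form $w^2=b(u,v)$ in suitable local coordinates $u,v$ on $Y$, so $\bar y$ is a Gorenstein double point.

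Writing $K_X=\pi^*K_{\bar X}-\sum a_iE_i$ with $a_i\ge 0$ for the minimal resolution, intersecting with an exceptional curve $E_j$, using $\pi^*K_{\bar X}\cdot E_j=0$, and invoking that $K_X$ is nef (as $X$ is minimal of non-negative Kodaira dimension; in the situations of interest $K_X$ is even numerically trivial, which makes this step immediate) shows that $\sum a_iE_i$ pairs non-positively with every $E_j$ and hence vanishes by non-degeneracy of the negative-definite form. Thus $\bar y$ is canonical, i.e. a rational double point; adjunction then makes each $E_i$ a smooth rational $(-2)$-curve, and the classification of rational double points identifies the dual graph of $C$ with one of $A_n$, $D_n$, $\sfE_n$.

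Finally I would pin down the action of $\sigma$ on $C$. Since $\sigma$ preserves $C$, it acts on its dual ADE diagram, and that diagram's automorphism group is trivial for $A_1,\sfE_7,\sfE_8$, is $\bbZ/2$ for $A_n\ (n\ge 2)$, $D_n\ (n\ne 4)$ and $\sfE_6$, and is $\frakS_3$ for $D_4$. To decide which element occurs, note that $\mathrm{Fix}(\sigma)=\pi^{-1}(\bar f^{-1}(B))$ for $B$ the branch curve, so an exceptional curve is fixed pointwise precisely when it lies over $B$; whether it does is governed by the parity of the multiplicity with which the corresponding exceptional divisor occurs in the total transform of the plane curve singularity $\{b=0\}$ along an embedded resolution. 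Running this through the standard normal forms of the $A_n,D_n,\sfE_n$ singularities — equivalently, recognising $C$ as a proper sub-configuration of a Kodaira fibre of the genus-one pencils present in the applications — yields exactly the stated formulas (the chain of an $A_n$ is reversed, the fork of a $D_n$ is swapped iff $n$ is odd, and $\sigma$ is the unique nontrivial graph symmetry of $\sfE_6$). Everything before this is formal; the main obstacle is precisely this last step, the explicit case-by-case determination of how the deck involution permutes the components of each rational double point configuration, which is the computation supplied by \cite{Shimada}.
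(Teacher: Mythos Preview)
The paper does not prove this lemma; it simply attributes it to \cite{Shimada}, section~3, and states the result. Your outline is the standard route to such a statement---Stein-factorize, recognize the intermediate surface locally as $w^2=b(u,v)$, identify the singularity as a rational double point, and then determine the deck involution on each ADE configuration case by case---and you end, just as the paper does, by deferring the explicit case analysis to \cite{Shimada}. So in substance your approach and the paper's (non-)proof agree.

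One step to tighten. Your argument that the singularity is canonical is correct when $K_X$ is numerically trivial: then $K_X\cdot E_j=0$ for every exceptional $E_j$, so $\bigl(\sum a_iE_i\bigr)\cdot E_j=0$ for all $j$, and non-degeneracy of the intersection form forces $\sum a_iE_i=0$. You rightly flag that this covers all of the paper's applications (Enriques and their K3 covers). But the general nef version you sketch---``$\sum a_iE_i$ pairs non-positively with every $E_j$ and hence vanishes by non-degeneracy of the negative-definite form''---is not a valid inference: on a negative-definite lattice, $D\cdot E_j\le 0$ for all $j$ together with $a_i\ge 0$ only yields $D^2\le 0$, which is automatic and does not force $D=0$. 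Indeed, a double cover of a smooth surface branched along a curve with an ordinary quadruple point has a simple elliptic (non-ADE) singularity, and its minimal resolution can be a minimal surface of general type; so the lemma in the generality stated really needs the hypothesis $K_X\equiv 0$ (or some substitute), which is the only case used. Your parenthetical already anticipates this, so the fix is just to drop the nef clause and argue directly from $K_X\equiv 0$.
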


Let $M_{ij}$ be the sublattice spanned by $f_i,f_j$ and the sublattice 
$\calR_{ij}$. For any $\gamma\in \Num(S)$,  we can write 
\beq\label{new}
\gamma = (\gamma,f_i)f_j+(\gamma,f_j)f_i+r+\gamma^\perp,
\eeq
where $r\in \calR_{ij}$ and $\gamma^\perp\in M_{ij}^\perp$. Since $\gamma^\perp$ is contained in the orthogonal complement of the eigensubspace of $\Num(S)_\bbQ$ with eigenvalue $1$, we have the $g_{ij}^*(\gamma^\perp) = -\gamma^\perp.$ Applying $g_{ij}^*$ to \eqref{new}, we obtain
\beq\label{main}
g_{ij}^*(\gamma) = -\gamma +2(\gamma,f_i)f_j+2(\gamma,f_j)f_i+g_{ij}^*(r)+r.
\eeq
This formula will allow us to compute the action of $g_{ij}$ on $\Num(S)$.

\section{Salem numbers of products of double plane involutions}
\subsection{The first experiment: a general case}In our first experiment,  we assume that the  maps  $\phi_{12},\ldots,\phi_{kk+1}$  are finite morphisms, i.e. do not blow down any curves.  In this case $M_{ij}$ is spanned by $f_i,f_j$, and we get from \eqref{main}
\beq\label{eq2}
g_{ij}^*(f_a) = 2f_i+2f_{j}-f_a, \ a\ne i,j, \quad g_{ij}^*(f_a) = f_a,\  a = i,j.
\eeq
The formula gives the  matrix of $g_{ij}^*$ in the basis $(f_1,\ldots,f_{10})$ of $\Num(S)_\bbQ$. For any $k = 1,\ldots,10$ let
$$c_k: = g_{12}\circ\cdots \circ g_{kk+1}, $$
where $g_{10,11}:= g_{1,10}$. 
Note that the cyclic permutation of   $(g_{12},g_{23},\ldots,g_{10,1})$,  gives a conjugate composition. However, different order on the set of involutions lead sometimes to non-conjugate compositions. 

In this and the following experiment we will consider only the automorphisms $c_k$'s. Using formula \eqref{main}, we compute the matrix of $c_k$ in the basis $(f_1,\ldots,f_{10})$,  find its characteristic polynomial and find its Salem factor and its spectral radius. We check that $c_2^*$ is not hyperbolic and  obtain that  
the Salem polynomials of $c_3^*,\ldots,c_{10}^*$ are equal to 
\begin{eqnarray*}
&&x^4-16x^3+14x^2-16x+1,\\
&&x^2-14x+1,\\
&&x^6-54x^5+63x^4-84x^3+63x^2-54x+1,\\
&&x^6-70x^5-113x^4-148x^3-113x^2-70x+1,\\
&&x^6-186x^5-129x^4-332x^3-129x^2-186x+1,\\
&&x^8-320x^7-548x^6-704x^5-698x^4-704x^3-548x^2-320x+1,\\
&&x^{10}-706x^9+845x^8-1048x^7+1202x^6-1048x^3+845x^2-706x+1,\\
&&x^8-992x^7-1700x^6-1568x^5-1466x^4-1568x^3-1700x^2-992x+1,
\end{eqnarray*}
respectively.

\subsection{The second experiment}
In our second experiment, we assume that each divisor class
$$r_{ii+1} = f_i+f_{i+1}-f_{10-i+1},\ i = 1,2,3,4,$$
is effective and represented by a $(-2)$-curve. For $m > 1$, the curves are disjoint. In the Fano model, they  are smooth rational curves of degree 3.

We have  $M_{ii+1} = \la f_i,f_{i+1},r_{ii+1}\ra$ for $i = 1,\ldots,4$ and $M_{ii+1} = \la f_i,f_{i+1}\ra$ for $i > m$.  
Using formula \eqref{main}, we obtain the following minimal polynomials of $c_3^*,\ldots,c_{10}^*$.

\begin{itemize}
\item $m = 1$:
\begin{eqnarray*}
&&x^4-12x^3+6x^2-12x+1,\\
&&x^2-10x+1,\\
&&x^6-42x^5+31x^4-44x^3+31x^2-42x+1,\\
&&x^6-50x^5-65x^4-92x^3-65x^2-50x+1,\\
&&x^6-142x^5-145x^4-260x^3-145x^2-142x+1,\\
&&x^8-236x^7-316x^6-404x^5-394x^4-404x^3-316x^2-236x+1,\\
&&x^{8}-452x^7+452x^6-892x^5+502x^4-892x^3+452x^2-452x+1,\\
&&x^8-576x^7+44x^6-704x^5-90x^4-704x^3+44x^2-576x+1.
\end{eqnarray*}
\end{itemize}

\begin{itemize}
\item $m = 2$
\begin{eqnarray*}
&&x^2-8x+1,\\
&&x^4-8x^3-6x^2-8x+1,\\
&&x^6-34x^5-5x^4-52x^3-5x^2-34x+1,\\
&&x^6-42x^5-21x^4-68x^3-21x^2-42x+1,\\
&&x^8-120x^7+8x^6-136x^5+46x^4-136x^3+8x^2-120x+1,\\
&&x^6-218x^5-113x^4-300x^3-113x^2-218x+1,\\
&&x^{10}-430x^9+305x^6-192x^7+206x^6-36x^5+206x^4-192x^3+305x^2-430x+1,\\
&&x^{10}-354x^9-231x^8-272x^7-282x^6-28x^5-282x^4-272x^3-231x^2-354x+1.
\end{eqnarray*}
\end{itemize}

\begin{itemize}
\item $m = 3$
\begin{eqnarray*}
&&x^4-5x^3-5x+1,\\
&&x^4-8x^3-2x^2-8x+1,\\
&&x^8-27x^7+26x^6-53x^5+42x^4-53x^3+26x^2-27x+1,\\
&&x^6-35x^5+11x^4-66x^3+11x^2-35x+1,\\
&&x^8-97x^7+146x^6-207x^5+250x^4-207x^3+146x^2-97x+1,\\
&&x^8-173x^7-230x^6-99x^5-22x^4-99x^3-22x^2-99x+1,\\
&&x^{8}-389x^7+186x^6-267x^5-278x^4-267x^3+186x^2-389x+1,\\
&&x^8-291x^7-246x^6-221x^5-214x^4-221x^3-246x^2-291x+1.
\end{eqnarray*}
\end{itemize}

\begin{itemize}
\item $m = 4$
\begin{eqnarray*}
&&x^4-5x^3-5x+1,\\
&&x^6-5x^5-4x^4-12x^3-4x^2-5x+1,\\
&&x^8-21x^7+5x^6-43x^5+4x^4-43x^3+5x^2-21x+1,\\
&&x^6-33x^5-8x^4-60x^3-8x^2-33x+1,\\
&&x^8-91x^7-91x^6-133x^5-124x^4-133x^3-91x^2-91x+1,\\
&&x^8-165x^7+223x^6-59x^5-133x^4-59x^3+223x^2-165x+1,\\
&&x^6-371x^5-62x^4-80x^3-62x^2-371x+1,\\
&&x^{10}-277x^9-104x^8+390x^7-25x^6-546x^5-25x^4+390x^3-104x^2-277x+1.
\end{eqnarray*}
\end{itemize}
 The results of the computations of the dynamical degrees are given in the following table.

\begin{table}[ht]
\centering 
  \begin{tabular}{|c|c|c|c|c|c|c|c|c|}\hline
k/m&3&4&5&6&7&8&9&10 \\ \hline\hline
0&$15.1$&$13.9$&$52.8$&$71.6$&$186.7$&$321.7$&$704.8$&$993.7$\\
$1$&$11.6$&$9.9$&$41.3$&$51.3$&$143.0$&$273.3$&$429.2$&$575.9$\\
$2$&$7.8$&$8.8$&$34.2$&$42.5$&$119.9$&$218.5$&$429.2$&$354.6$\\
$3$&$5.2$&$8.3$&$26.0$&$34.7$&$95.5$&$174.3$&$388.5$&$291.8$\\
$4$&$5.2$&$6.0$&$20.80$&$33.2$&$92.0$&$163.6$&$371.2$&$277.4$	
\\\hline\hline
\end{tabular}
 \end{table}

One may ask whether the configurations of the curves representing the classes 
$r_{kk+1}$ can be realized on an Enriques surface. We omit the details to give the positive answer. The numerical classes of $r_{ii+1}$ define the Nikulin invariant $r(S)$ of the surface $S$ (see \cite{Dolgachev1}, \S 5).
If $\bbk = \bbC$, one can use the Global Torelli Theorem to realize this Nikulin invariant by curves of degree $\le 4$ with respect to the Fano polarization. These curves would realize our $(-2)$-curves $r_{ii+1}$. We omit the details of this rather technical theory.

\section{Enriques surfaces of Hessian type} Here we assume that $\cha(\bbk)\ne 2,3$. 
\subsection{Cubic surfaces and  their Hessian quartic surfaces} All material here is very classical, and, if no explanation of a fact is given, one can find it in many sources, for example, in  \cite{DG}, \cite{DK}, \cite{CAG}.   Let $C$ be a nonsingular cubic surface in $\bbP^3$ given by equation $F = 0$. The determinant  of the Hessian matrix of the second partial derivatives of $F$ is a homogeneous polynomial of degree 4. It defines a quartic surface $H(C)$, the \emph{Hessian surface} of $C$. We assume that  $C$ is  \emph{Sylvester non-degenerate}, i.e. $F$ can be written as a sum of cubes of five linear forms $l_1,\ldots,l_5$ in projective coordinates $t_1,\ldots,t_4$ in $\bbP^3$, no four of which are linearly dependent (see \cite{CAG}, p. 260). By multiplying $l_i$'s by constants, we may assume that 
$$F = \lambda_1l_1^3+\cdots+\lambda_5l_5^3 = 0, \quad l_1+\cdots+l_5 = 0.$$ 
Then, we embed $C$ in $\bbP^4$ with coordinates $(x_1,\ldots,x_5)$ via the map given by 
$x_i = l_i(t_1,\ldots,t_4)$, $i = 1,\ldots,5$. The image is a cubic surface given by  equations
\beq\label{cub}
x_1+\cdots+x_5 = 0, \quad \sum_{i=1}^5\lambda_ix_i^3 = 0.
\eeq
The image of the Hessian surface $H(C)$ of $F$ is given by equations
\beq\label{hess}
\sum_{i=1}^5x_i = \sum_{i=1}\frac{1}{\lambda_i x_i} = 0,
\eeq 
where we understand that the second equation is multiplied by the common denominator to obtain a homogeneous equation of degree 4 (see \cite{CAG}, 9.4.2).

The union of the planes $l_i = 0$ is called the \emph{Sylvester pentahedron}. Its image in $\bbP^4$ is the union of the intersection of the coordinates hyperplanes $x_i = 0$ with the hyperplane $x_1+\cdots+x_5 = 0$. The Hessian surface contains 10 edges $L_{ab}$ of the pentahedron given by equations $x_a = x_b = x_1+\cdots+x_5 = 0$. It also contains its 10 vertices given by $x_i=x_j=x_k = x_1+\cdots+x_5 = 0$. They are ordinary double points $P_{ab}$ on $H(F)$, where 
$\{i,j,k,a,b\} = \{1,2,3,4,5\}$.  A line $L_{ab}$ contains three points $P_{cd}$, where $\{a,b\}\cap \{c,d\} = \emptyset$. Each point $P_{ab}$ is contained in three lines $L_{cd}$. The lines and points form a well-known abstract \emph{Desargues configuration} $(10_3)$ (see \cite{Hilbert}, III, \S 19).

  Figure \ref{sylvester} is the picture of the Sylvester pentahedron with vertices $P_{ab}$ and edges $L_{ab}$. 

\begin{figure}[ht]
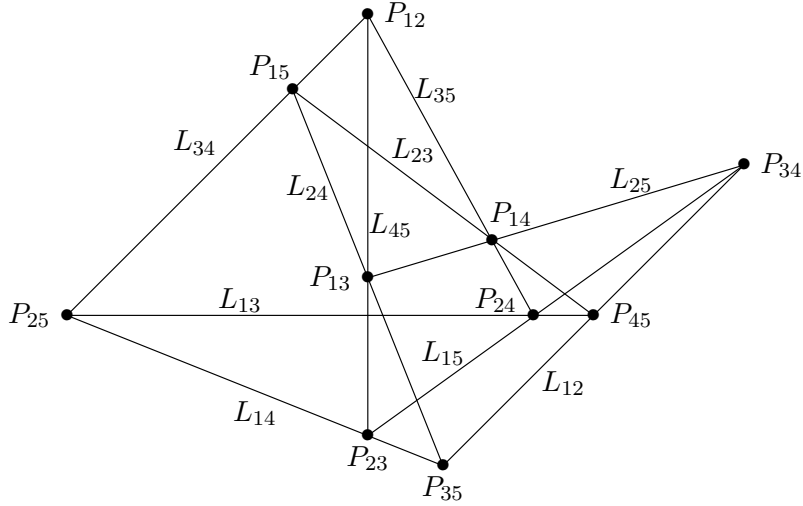

\xy (-40,-25)*{};
(0,0)*{};(40,40)*{}**\dir{-};(0,0)*{};(70,0)*{}**\dir{-};(0,0)*{};(50,-20)*{}**\dir{-};
(50,-20)*{};(90,20)*{}**\dir{-};(40,40)*{};(40,-16)*{}**\dir{-};(50,-20)*{};(30,30)*{}**\dir{-};
(40,40)*{};(62,0)*{}**\dir{-};(40,-16)*{};(90,20)*{}**\dir{-};(70,0)*{};(30,30)*{}**\dir{-};(90,20)*{};(40,5)*{}**\dir{-};
@={(0,0),(30,30),(40,40),(70,0),(90,20),(50,-20),(40,-16),(62,0),(40,5),(56.5,10)}@@{*{\bullet}};
(-5,0)*{P_{25}};(27,33)*{P_{15}};(45,40)*{P_{12}};(95,20)*{P_{34}};(75,0)*{P_{45}};(50,-23)*{P_{35}};
(40,-19)*{P_{23}};(35,5)*{P_{13}};(59,13.5)*{P_{14}};(57,2)*{P_{24}};
(17,23)*{L_{34}};(25,-13)*{L_{14}};(23,2)*{L_{13}};(66,-9)*{L_{12}};(43,12)*{L_{45}};(49,30)*{L_{35}};
(32,17)*{L_{24}};(75,18)*{L_{25}};(50,-5)*{L_{15}};(46,22)*{L_{23}};
\endxy
\caption{Sylvester pentahedron}\label{sylvester}
\end{figure}

\subsection{The Cremona involution of the Hessian surface and Enriques surfaces}
The  birational transformation
\beq\label{cremona}
\Phi_{\st}:(x_1,x_2,x_3,x_4,x_5) \mapsto \big(\frac{1}{\lambda_1x_1},\frac{1}{\lambda_2x_2},\frac{1}{\lambda_3x_3},\frac{1}{\lambda_4x_4},\frac{1}{\lambda_5x_5}\big)
\eeq
of $\bbP^4$ satisfies $\Phi_{\st}^2 = \id_{\bbP^4}$. It is equal to the \emph{standard Cremona involution} of $\bbP^4$ after one scales the coordinates. The fixed points of $\Phi_{\st}$ (in the domain of the definition) have coordinates $(\pm \sqrt{\lambda_1}^{-1},\ldots,\pm \sqrt{\lambda_5}^{-1})$. We additionally assume that none of them lies on the Hessian surface.  Under our assumption,  the birational involution $\Phi_{\st}$ defines a fixed-point-free involution $\tau$ on a minimal nonsingular model $X$ of the Hessian surface $H(C)$ with the quotient isomorphic to an  Enriques surface. We say that  such an Enriques surface is of \emph{Hessian type}. Let
$$\pi:X\to S = X/(\tau).$$
be the projection map. It is the K3-cover of $S$.

Let $N_{ab}$ be the exceptional curve over $P_{ab}$ on $X$ and let $T_{ab}$ be the proper transform of the edge $L_{ab}$ on $X$.
It follows from the definition of the birational transformation $\Phi_{\st}$ that the involution $\tau$ of $X$ sends $N_{ab}$ to $T_{ab}$. Let $h$ be the pre-image on $X$ of the class of a hyperplane section of $H(C)$. We can represent it by  a coordinate hyperplane section of $H(C)$. It is the union of four edges of the Sylvester pentahedron. It is easy to see from Figure \ref{sylvester}  that 
\beq\label{h}
h = \sum_{i\in \{a,b\}}T_{ab}+ \sum_{i\not\in \{c,d\}}N_{cd}, \quad i = 1,2,3,4,5.
\eeq
Applying the involution $\tau$, we obtain
$$h+\tau^*(h) = \sum_{\{a,b\}}(T_{ab}+ N_{ab}).$$
Here and later we identify the divisor class of any smooth rational curve on $X$ or on $S$ with the curve. 

 Let $U_{ab}$ denote the image of the curves $N_{ab}+T_{ab}$ on the Enriques surface.  The intersection graph of these 10 curves is the famous \emph{Petersen graph} given in Figure \ref{peter}.  Let $\Delta$ be the sum of the curves $U_{ab}$. We have
 $$\pi^*(\Delta) = h+\tau^*(h).$$
Then it is easily checked that $\Delta^2 = 10$, and $\Delta\cdot U_{ab} = 1$.

 \begin{figure}[th]
\begin{center}
\includegraphics[scale=.2]{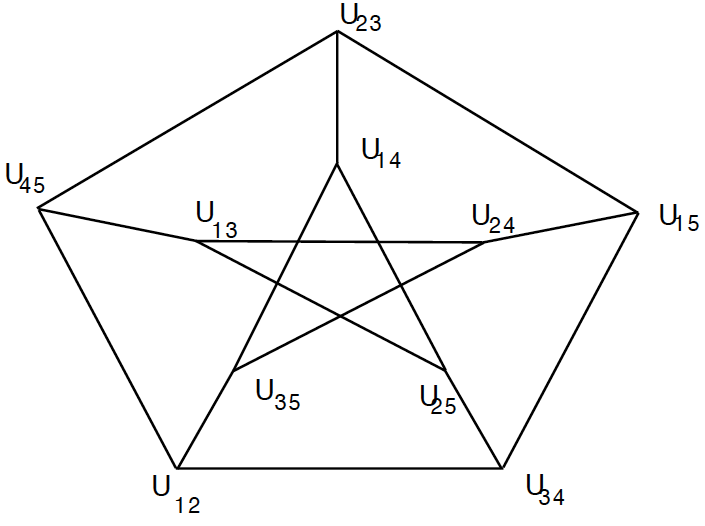}
\caption{Petersen graph}\label{peter}
\end{center}
\end{figure}

\subsection{Elliptic pencils}
Consider the pencil of planes containing an edge $L_{ab}$. A general plane from this pencil cuts out on $H(C)$ the union of the line $L_{ab}$ and a plane cubic curve.  The preimage of the pencil of residual cubic curves on $X$ is a pencil  of elliptic curves on $X$ defined by the linear system 
\beq\label{gab}
|G_{ab}|: = |h-T_{ab}-\sum_{\{c,d\}\cap \{a,b\} = \emptyset} N_{cd}|.
\eeq
If we represent $h$ by a plane containing $L_{ab}$ and the point $P_{ab}$, the residual curve is a plane cubic with a double point at $P_{ab}$. Let $K_{ab}$ be its proper transform on $X$. Another representative of $h$ is a plane tangent to $H(C)$ along the line $L_{ab}$. The residual curve is a conic intersecting $L_{ab}$ with multiplicity 2. Let $K_{ab}'$ be its proper transform on $X$. We obtain
\beq\label{gab2}
|G_{ab}| = |N_{ab}+K_{ab}| = |T_{ab}+K_{ab}'|.
\eeq
It follows from the definition of the standard Cremona involution that $\Phi_{\st}$  preserves the pencil of hyperplanes through any edge of the pentahedron. Thus the elliptic pencil $|G_{ab}|$ is invariant with respect to $\tau$, and we get 
$$\tau^*(K_{ab}) = K_{ab}'.$$
Being $\tau$-invariant, the elliptic pencil $|G_{ab}|$ descends to an elliptic fibration $|\bar{G}_{ab}|$ on $S$. Note that, for a general divisor $D\in |\bar{G}_{ab}|$, we have  $\pi^*(D) = G+G'$, where $G$ and $G'$ are disjoint members of $|G_{ab}|$.  As we explained in subsection \ref{ss2.2}, we can write $|G_{ab}|$  as  $|2F_{ab}| = |2F_{ab}'|$, where the numerical classes $f_{ab},f_{ab}'$ of $F_{ab},F_{ab}'$ are primitive nef isotropic vectors in $\Num(S)$.
 We immediately check that $G_{ab}\cdot G_{cd} = 2$ for different subscripts, hence $f_{ab}\cdot f_{cd} = 1$ and the classes $f_{ab}$  form an isotropic $10$-sequence. It follows from \eqref{gab} that 
$$2\sum_{\{ab\}}G_{ab} = \sum_{\{ab\}}G_{ab}+\sum_{\{ab\}}\tau^*(G_{ab}) = 10(h+\tau^*(h))-4\sum_{\{a,b\}}(N_{ab}+T_{ab}) = 6(h+\tau^*(h)).$$
Thus,
$$\sum_{\{a,b\}}f_{ab} = 3\delta,$$
where $\delta$ is the numerical class of the Fano polarization 
\beq\label{delta1}
\Delta = \sum_{\{ab\}}U_{ab}.
\eeq
It follow that each curve $U_{ab}$ becomes a line in the Fano model of $S$.  Equation \eqref{gab2} shows that each pencil  $|2F_{ab}|$ has a reducible member $U_{ab}+C_{ab},$ where $C_{ab}$ is the image of the curves $K_{ab},K_{ab}'$ on $S$.  Another reducible member of this fibration is the sum of curves $U_{cd}$, where $\#\{a,b\}\cap \{c,d\} = 1$. It is of  type $I_6$ in Kodaira's classification of singular fibers of relatively minimal elliptic fibrations (see \cite{Cossec}, Chapter III, \S 1).

\subsection{Projection involutions}
Recall that the projection map of a quartic surface $H$ with a double point $p$ with center at $p$ defines  a rational  map $H\da \bbP^2$ of degree $2$. Its deck transformation extends to a biregular involution $\sigma_p$ of  a minimal nonsingular model  of $H$. Let $\sigma_{ab}$ be such a transformation of the minimal nonsingular model $X$ of the Hessian surface $H(C)$ defined by the projection from the point $P_{ab}$.

The following lemma  follows easily from the definition of the projection map.

\begin{lemma}\label{action2}  Let $\sigma_{12}$ be the involution of $X$ defined by the projection map from the node $P_{12}$. Then
\begin{eqnarray*}
\sigma_{12}^*(N_{12}) &=&K_{12}'\ \text{if $K_{12}$ is irreducible}, \\
      &=&N_{12}\ \text{otherwise},\\
\sigma_{12}^*(T_{12}) &=&K_{12}\ \text{if $K_{12}$ is irreducible}, \\
      &=&T_{12}\  \text{otherwise},\\
\sigma_{12}^*(N_{13}) &=& N_{23}, \quad \quad \sigma_{12}^*(T_{13})  =  T_{23},\\
\sigma_{12}^*(N_{14}) &=& N_{24}, \quad \quad \sigma_{12}^*(T_{14})  =  T_{24},\\
\sigma_{12}^*(N_{15}) &=& N_{25}, \quad \quad \sigma_{12}^*(T_{15})  =  T_{25}\\
\sigma_{12}^*(N_{34}) &=& N_{34}, \quad \quad \sigma_{12}^*(T_{34})  =  T_{34}\\
\sigma_{12}^*(N_{35}) &=& N_{35}, \quad \quad \sigma_{12}^*(T_{35})  =  T_{35}\\
\sigma_{12}^*(N_{45}) &=& N_{45}, \quad \quad \sigma_{12}^*(T_{45})  =  T_{45},\\
\end{eqnarray*}      
\end{lemma}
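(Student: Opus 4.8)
The plan is to analyze the geometry of the projection map $p_{12}\colon H(C)\dashrightarrow \bbP^2$ from the node $P_{12}$, lift it to the minimal resolution $X$, and track how the deck transformation $\sigma_{12}$ permutes the 20 curves $N_{ab},T_{ab}$. First I would recall the general mechanism: projection from a node $p$ of a quartic surface $H$ resolves, after blowing up $p$, to a double cover $X\to \bbP^2$ branched along a sextic, and $\sigma_{p}$ is the covering involution; a curve $D\subset X$ not contracted by the projection is sent to $\sigma_p^*(D)$, where $D+\sigma_p^*(D)$ is the pullback of a line, unless $D$ is a component of the branch locus (fixed) or is contracted (then it is fixed as the exceptional curve of the node). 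Concretely, projecting from $P_{12}$, a line in $\bbP^2$ pulls back to a hyperplane section of $H(C)$ through $P_{12}$, i.e. to a member of $|h - N_{12}|$ on $X$ after resolving (the exceptional curve $N_{12}$ over $P_{12}$ is contracted by the projection, so $N_{12}$ itself plays the role of the exceptional $(-1)$-curve on the blown-up quartic and behaves specially).

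Next I would go through the three geometric types of curve on $X$. (i) The curves $T_{ab},N_{ab}$ with $\{a,b\}\cap\{1,2\}=\emptyset$, namely the six curves indexed by pairs from $\{3,4,5\}$: the edge $L_{ab}$ and the node $P_{ab}$ do not meet $P_{12}$, the projection maps each of these curves isomorphically to its image, and—crucially—each such $L_{ab}$ passes through $P_{12}$? No: the point is that the line $\overline{P_{12}L_{ab}}$ meets $H(C)$ with the residual curve already accounting for $L_{ab}$, so $L_{ab}$ (hence $T_{ab}$) is mapped to itself and lies in a sublocus fixed by $\sigma_{12}$; the same for $N_{ab}$. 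This gives the last five displayed lines. (ii) The curves indexed by a pair meeting $\{1,2\}$ in exactly one element: these come in the four pairs $\{13,23\},\{14,24\},\{15,25\}$ (and their $T$-analogues). Here the plane spanned by $P_{12}$ and the edge $L_{1c}$ also contains $L_{2c}$, since $P_{12},P_{1c}$ and $P_{2c}$—wait, more directly: the projection from $P_{12}$ identifies $L_{1c}$ and $L_{2c}$ because they are swapped by the Cremona involution $\Phi_{\st}$ restricted to this pencil, equivalently because $P_{12}$, $L_{1c}$, $L_{2c}$ all lie in the coordinate hyperplane $x_d=x_e=0$ pattern; thus $\sigma_{12}^*(T_{1c})=T_{2c}$ and $\sigma_{12}^*(N_{1c})=N_{2c}$, giving the middle three pairs of lines. (iii) The curves $N_{12},T_{12}$ over the center $P_{12}$ and its edge: here I use the description in \eqref{gab2}, $|G_{12}|=|N_{12}+K_{12}|=|T_{12}+K_{12}'|$, together with $\tau^*(K_{12})=K_{12}'$ and the fact that the elliptic pencil $|G_{12}|$ is exactly the pencil cut by planes through $P_{12}$—which is the pencil of lines on $\bbP^2$ pulled back by the projection. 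Projecting from $P_{12}$, the residual cubic $K_{12}$ (with a node at $P_{12}$) and the residual conic $K_{12}'$ are interchanged when $K_{12}$ is irreducible, so $\sigma_{12}^*(N_{12})=\sigma_{12}^*(G_{12}-K_{12})=G_{12}-K_{12}'=T_{12}$—hmm, but the statement claims $\sigma_{12}^*(N_{12})=K_{12}'$, so I must instead use that $N_{12}$ is contracted by the projection and its image under $\sigma_{12}$ is the other curve contracted to the same point of $\bbP^2$, which is $K_{12}'$ (the conic meets $L_{12}$ doubly, so maps to a point); in the degenerate case $K_{12}$ reducible the pencil acquires an extra reducible fiber and $N_{12}$ becomes fixed. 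The $T_{12}$ case is symmetric with the roles of conic and nodal cubic exchanged.

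The main obstacle I expect is item (iii): pinning down precisely which curve in the fiber of the projection is the $\sigma_{12}$-image of $N_{12}$ (respectively $T_{12}$), and getting the dichotomy "$K_{12}$ irreducible versus not" correct. This requires a careful local analysis of the double cover near $P_{12}$—understanding that blowing up the node $P_{12}$ on $H(C)$ produces a surface with the resolution $X\to \widehat{H}$ contracting $N_{12}$ to a point, while the strict transform of the conic $K_{12}'$ (which is tangent to $L_{12}$) also collapses to a point of $\bbP^2$, so that the branch sextic passes through that point and $\sigma_{12}$ swaps the two $(-1)$- or $(-2)$-configurations there. When $K_{12}$ degenerates (e.g. the nodal cubic breaks into $L_{12}$ plus a residual conic, or further), the incidence changes and $N_{12}$, $T_{12}$ become components of the ramification, hence fixed. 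Once (iii) is settled, items (i) and (ii) are routine bookkeeping with the Petersen/Desargues incidences already recorded, and the lemma follows by assembling the three cases.
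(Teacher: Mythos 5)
Your proposal fills in a proof where the paper gives none, but the geometric facts it rests on are wrong at several decisive points. First, the incidence data behind your cases (i)--(ii): since $P_{ab}$ lies on $L_{cd}$ exactly when $\{a,b\}\cap\{c,d\}=\emptyset$, the three edges $L_{34},L_{35},L_{45}$ are precisely the edges \emph{through} the center $P_{12}$. Hence $T_{34},T_{35},T_{45}$ are contracted by the projection (as are all nine curves $N_{cd}$ with $\{c,d\}\ne\{1,2\}$); none of them ``maps isomorphically to its image'' as you assert. The correct mechanism for the last five lines of the lemma, and for $N_{1c}\mapsto N_{2c}$, is a fiber analysis of the contracted configurations: over the image of $L_{45}$ the contracted curves form the chain $N_{13}$--$T_{45}$--$N_{23}$ of type $A_3$, and the deck involution swaps the two ends and fixes the middle (exactly as in Lemma~\ref{shimada}), while over the image of $P_{34}$ the only contracted curve is $N_{34}$, so it is fixed. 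Your alternative justification in (ii) that $L_{1c}$ and $L_{2c}$ are ``swapped by the Cremona involution $\Phi_{\st}$'' is false and irrelevant ($\Phi_{\st}$ swaps $N_{ab}$ with $T_{ab}$); the usable fact is that $P_{12}$, $L_{1c}$, $L_{2c}$ lie in the plane $x_c=x_1+\cdots+x_5=0$, whose section of $H(C)$ is $L_{1c}+L_{2c}+L_{cd}+L_{ce}$ with the last two edges through $P_{12}$, so a general line of that plane through $P_{12}$ meets the section residually in one point of $L_{1c}$ and one point of $L_{2c}$.

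Second, and more seriously, the case you yourself single out as the main obstacle, case (iii), is settled by a false claim. The projection is given on $X$ by the net $|h-N_{12}|$, and $(h-N_{12})\cdot N_{12}=2$, so $N_{12}$ is \emph{not} contracted: it maps isomorphically onto a conic $Q\subset\bbP^2$, the projectivized tangent cone of $H(C)$ at $P_{12}$. Likewise an irreducible conic such as $K_{12}'$ can never be contracted by a projection from a point (here $(h-N_{12})\cdot K_{12}'=2$), so ``the other curve contracted to the same point'' does not exist. What actually needs proof is that $N_{12}$ and $K_{12}'$ have the same image $Q$ and are interchanged fiberwise: a general line through $P_{12}$ in a tangent-cone direction meets $H(C)$ at $P_{12}$ with multiplicity $3$ and in one residual point, and one must show these residual points trace exactly the conic $K_{12}'$ (equivalently, the cone over $K_{12}'$ with vertex $P_{12}$ is the tangent cone of $H(C)$ at $P_{12}$), with the reducible/Eckardt case $\lambda_1=\lambda_2$ treated separately. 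You never establish this; indeed your first computation in (iii) outputs $\sigma_{12}^*(N_{12})=T_{12}$, contradicting the lemma, and is abandoned without locating the error. So as written the proposal does not prove the lemma: the easy cases are argued from incorrect incidences (though their conclusions are right), and the key case is unproved. (For comparison, the paper itself offers no argument beyond asserting that the lemma follows from the definition of the projection map.)
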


It follows that $\sigma_{ab}^*$ acts on the curves $N_{cd},T_{cd}, \{c,d\}\ne \{a,b\},$ via the transposition $(ab)$ applied to the subscript indices. Moreover, if $K_{12}$ is reducible then it acts as a transposition on all curves $N_{ab}$ and $T_{ab}$.

\begin{cor}\label{action} The projection involutions $\sigma_{ab}$ commute with the involution $\tau$ and descend to involutions $h_{ab}$ of the Enriques surface $S = X/(\tau)$. The involution $h_{12}$ acts on the curves $U_{ab}$ as follows:
\begin{eqnarray*}
h_{12}^*(U_{13})&=&U_{23},\\
h_{12}^*(U_{14})&=&U_{24},\\
h_{12}^*(U_{15})&=&U_{25},\\
h_{12}^*(U_{34})&=&U_{34},\\
h_{12}^*(U_{35})&=&U_{35},\\
h_{12}^*(U_{45})&=&U_{34}.
\end{eqnarray*}
$$
h_{12}^*(U_{12}) = \begin{cases}
     C_{12} & \text{if}\ C_{12}\ \text{is irreducible}, \\
      U_{12}& \text{otherwise}.
\end{cases}$$
\end{cor}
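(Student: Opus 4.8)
I would split the statement into three parts and treat them in order: (a)~every $\sigma_{ab}$ commutes with $\tau$; (b)~consequently $\sigma_{ab}$ descends to an involution $h_{ab}$ of $S=X/(\tau)$; (c)~the explicit table for the action of $h_{12}$ on the ten curves $U_{cd}$. Parts (b) and (c) are essentially formal once (a) is available. For (b): a $\tau$-equivariant automorphism of $X$ descends to $S$, and since $\tau$ is fixed-point-free while $\sigma_{ab}$ has a nonempty fixed locus, $\sigma_{ab}$ is neither $\id_X$ nor $\tau$, so the induced $h_{ab}$ satisfies $\pi\circ\sigma_{ab}=h_{ab}\circ\pi$ and is an involution distinct from $\id_S$. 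For (c): since $\tau$ interchanges $N_{cd}$ with $T_{cd}$ one has $\pi^{*}U_{cd}=N_{cd}+T_{cd}$, and since $\tau^{*}K_{12}=K_{12}'$ one has $\pi^{*}C_{12}=K_{12}+K_{12}'$; applying $\sigma_{12}^{*}$ and using $\pi\circ\sigma_{12}=h_{12}\circ\pi$ together with the injectivity of $\pi^{*}$ on $\Num(S)$, the table falls out term by term from Lemma~\ref{action2}. (Here $K_{12}$ is irreducible if and only if $C_{12}$ is, because $\pi$ restricts to a finite birational morphism $K_{12}\to C_{12}$, its degree being $1$ since $\tau(K_{12})=K_{12}'\ne K_{12}$.)

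The core is (a), which I would obtain by computing $\sigma_{12}^{*}$ and $\tau^{*}$ on $\Num(X)$ and checking that they commute there. For $\tau^{*}$ everything is already recorded: $\tau^{*}N_{cd}=T_{cd}$ and $\tau^{*}h=\sum_{\{a,b\}}(N_{ab}+T_{ab})-h$. For $\sigma_{12}^{*}$ one combines Lemma~\ref{action2} (the transposition $(12)$ on the $N_{cd},T_{cd}$ with $\{c,d\}\ne\{1,2\}$), the identity $\sigma_{12}^{*}(h-N_{12})=h-N_{12}$ --- valid because $|h-N_{12}|$ is the pullback of $\calO_{\bbP^{2}}(1)$ under the projection from $P_{12}$ and is hence $\sigma_{12}$-invariant --- and the expressions $\sigma_{12}^{*}N_{12}=K_{12}'=G_{12}-T_{12}$, $\sigma_{12}^{*}T_{12}=K_{12}=G_{12}-N_{12}$ (when $K_{12}$ is irreducible), where $G_{12}=h-T_{12}-\sum_{\{c,d\}\cap\{1,2\}=\emptyset}N_{cd}$ by \eqref{gab} and \eqref{gab2}. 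These formulas pin down both operators on the spanning set $\{h\}\cup\{N_{cd},T_{cd}\}$, and $\sigma_{12}^{*}\tau^{*}=\tau^{*}\sigma_{12}^{*}$ becomes a finite check: on each $N_{cd}$ and $T_{cd}$ the two composites visibly agree, and on $h$ the needed equality, after substituting $G_{12}$, reduces to a linear combination of the relations \eqref{h} --- concretely to the difference of the instances of \eqref{h} for $i=1$ and $i=2$. Thus $[\sigma_{12},\tau]$ acts trivially on $\Num(X)$, and the same computation applies to any pair $\{a,b\}$.

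What remains is to upgrade ``trivial on $\Num(X)$'' to ``$[\sigma_{ab},\tau]=\id_X$'', and this is the step I expect to need the most care. Over $\bbC$: both $\tau$ and $\sigma_{ab}$ are anti-symplectic (their quotients carry no nonzero holomorphic $2$-form --- $S$ is an Enriques surface, $X/(\sigma_{ab})$ is rational), so $[\sigma_{ab},\tau]$, a product of four anti-symplectic involutions, is symplectic; a symplectic automorphism acts trivially on the transcendental lattice $T(X)$ (its fixed part is a sub-Hodge-structure containing $\omega_X$, and $T(X)_{\bbQ}$ is irreducible), so together with triviality on $\Num(X)$ it acts trivially on a finite-index sublattice of $H^{2}(X,\bbZ)$, hence on all of $H^{2}(X,\bbZ)$, and hence is the identity by the Torelli theorem. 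For $\cha(\bbk)\ne2,3$ one replaces this by the faithfulness of the action on $\ell$-adic cohomology, or specializes from a family of Hessians over a base of characteristic zero, where the general member has Picard number $16$ with $\{h,N_{cd},T_{cd}\}$ spanning $\Num(X)_{\bbQ}$, so the relation $\sigma_{ab}\tau\sigma_{ab}\tau=\id$ holds on a dense set of fibers and spreads out. The main obstacle is therefore the combination of this Torelli-type argument with the bookkeeping in the $\Num(X)$-computation; all the remaining steps are routine. (A more geometric alternative for (a) would be to show directly that the projection from $P_{ab}$ is compatible with the standard Cremona involution $\Phi_{\st}$ on $H(C)$, but the lattice-theoretic route looks cleaner to push through.)
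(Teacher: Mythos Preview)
Your argument is correct, but it is considerably more elaborate than what the paper intends: the corollary is stated without proof, as an immediate consequence of Lemma~\ref{action2} together with the fact that $\tau$ swaps $N_{cd}\leftrightarrow T_{cd}$ and $K_{ab}\leftrightarrow K_{ab}'$. Your parts (b) and (c) match this; the divergence is in part (a).

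For (a) you compute on $\Num(X)$ and then invoke Torelli (plus a symplectic/antisymplectic argument, and a specialization argument in positive characteristic) to promote $[\sigma_{ab},\tau]^*=\id$ to $[\sigma_{ab},\tau]=\id_X$. This works, but the ``more geometric alternative'' you mention in your last sentence is in fact the one-line argument the paper has in mind (it is implicit in \cite{DK}). Concretely: in the coordinates of \eqref{hess} the node $P_{12}$ is $[1:-1:0:0:0]$, and the projection from $P_{12}$ is simply $[x_1:\cdots:x_5]\mapsto [x_3:x_4:x_5]$. The Cremona involution $\Phi_{\st}$ of \eqref{cremona} visibly descends along this projection to the standard Cremona $[y_3:y_4:y_5]\mapsto[\tfrac{1}{\lambda_3 y_3}:\tfrac{1}{\lambda_4 y_4}:\tfrac{1}{\lambda_5 y_5}]$ of $\bbP^2$; hence $\Phi_{\st}$ preserves the fibers of the projection and therefore commutes with its deck involution $\sigma_{12}$ as a birational self-map of $H(C)$, and so as an automorphism of $X$. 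This avoids Torelli entirely, is uniform in all characteristics $\ne 2,3$, and does not require knowing that $\Aut(X)\to\Or(\Num(X))$ is injective. Your lattice-plus-Torelli route buys nothing extra here and introduces the characteristic-dependent detour you yourself flag; the coordinate argument is both shorter and more robust.
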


Assume $C_{ab}$ is irreducible. Let  
$$\alpha_{ab} = f_{ab}-U_{ab}\in \Num(S).$$
We have 
$$
\alpha_{ab}^2 = -2.$$

We denote by $t_{ab}$ the  transformation of $\Pic(S)_\bbQ$ that permutes the basis $(U_{ij})$ via the 
 transposition $(ab)$ of $\{1,2,3,4,5\}$. Note that, in general, it is not realized by an automorphism of the surface.  
 
 \begin{cor} Assume $C_{ab}$ is irreducible. Then $h_{ab}^*$ acts as the composition of the reflection 
$r_{\alpha_{ab}}$ and the transformation $t_{ab}$. If $C_{ab}$ is reducible, then it acts as the transposition $(ab)$.
\end{cor}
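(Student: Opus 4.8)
The plan is to verify the stated identity of operators on $\Num(S)_{\bbQ}$ by evaluating both sides on the ten classes $U_{cd}$. These form a $\bbQ$-basis: the Gram matrix of $(U_{cd})$ is $A-2I_{10}$ with $A$ the adjacency matrix of the Petersen graph of Figure~\ref{peter}, and since that graph has eigenvalues $3,1,-2$ with multiplicities $1,5,4$, this matrix has determinant $(3-2)(1-2)^{5}(-2-2)^{4}=-256\neq 0$; as the intersection form on $\Num(S)$ is nondegenerate, the $U_{cd}$ are linearly independent. It suffices to treat $ab=12$, the argument being identical for any pair after relabelling $\{1,\dots,5\}$. By Corollary~\ref{action} and the discussion preceding it, $h_{12}^{*}$ acts on the nine curves $U_{cd}$ with $cd\neq 12$ through the transposition $(12)$ of the subscripts, i.e.\ $h_{12}^{*}(U_{cd})=U_{(12)(cd)}$ there, while $h_{12}^{*}(U_{12})=U_{12}$ if $C_{12}$ is reducible and $h_{12}^{*}(U_{12})=C_{12}$ if $C_{12}$ is irreducible. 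In the reducible case this says $h_{12}^{*}$ carries every $U_{cd}$ to $U_{(12)(cd)}$, which is exactly how $t_{12}$ was defined, so $h_{12}^{*}=t_{12}$.

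Now assume $C_{12}$ irreducible and put $\alpha_{12}=f_{12}-U_{12}$, so $\alpha_{12}^{2}=-2$. The heart of the matter is the computation of $(U_{cd},\alpha_{12})$ for all pairs. Because $U_{12}$ is a component of the fibre $U_{12}+C_{12}$ of $|2F_{12}|$ and the six curves $U_{cd}$ with $\#(\{c,d\}\cap\{1,2\})=1$ form the $I_{6}$-fibre of $|2F_{12}|$, each of these seven curves has $f_{12}\cdot U_{cd}=0$; for the six $I_{6}$-curves one also has $U_{12}\cdot U_{cd}=0$ in the Petersen graph, so $(U_{cd},\alpha_{12})=0$ for them. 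Pairing $f_{12}$ with the Fano class $\delta=\sum_{c'd'}U_{c'd'}$ and using $f_{12}\cdot\delta=3$ (which follows from $3\delta=\sum_{c'd'}f_{c'd'}$ together with $f_{12}^{2}=0$ and $f_{12}\cdot f_{c'd'}=1$), we get $f_{12}\cdot U_{34}+f_{12}\cdot U_{35}+f_{12}\cdot U_{45}=3$; each of these three nonnegative integers is at least $1$, since $C_{12}=2f_{12}-U_{12}$ is an irreducible curve of degree $5$ with respect to $\delta$, hence distinct from the line $U_{cd}$, and therefore $0\leq C_{12}\cdot U_{cd}=2f_{12}\cdot U_{cd}-U_{12}\cdot U_{cd}=2f_{12}\cdot U_{cd}-1$. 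Hence $f_{12}\cdot U_{cd}=1$ for $cd\in\{34,35,45\}$, and since $U_{12}\cdot U_{cd}=1$ for those pairs this gives $(U_{cd},\alpha_{12})=0$ as well; finally $(U_{12},\alpha_{12})=-U_{12}^{2}=2$. It follows that the reflection $r_{\alpha_{12}}$ fixes every $U_{cd}$ with $cd\neq 12$ and sends $U_{12}$ to $U_{12}+2\alpha_{12}=2f_{12}-U_{12}=C_{12}$.

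Finally, $t_{12}$ permutes the $U_{cd}$ by the transposition $(12)$, fixing $U_{12}$ and carrying no other $U_{cd}$ onto $U_{12}$, so $r_{\alpha_{12}}\circ t_{12}$ sends $U_{cd}\mapsto U_{(12)(cd)}$ for $cd\neq 12$ and $U_{12}\mapsto r_{\alpha_{12}}(U_{12})=C_{12}$; this agrees with $h_{12}^{*}$ on the whole basis, so $h_{12}^{*}=r_{\alpha_{12}}\circ t_{12}$. Since $h_{12}^{*}$, $r_{\alpha_{12}}$ and $t_{12}$ are all involutions, the relation $h_{12}^{*}=r_{\alpha_{12}}\circ t_{12}$ forces $r_{\alpha_{12}}$ and $t_{12}$ to commute, so the order in the composition does not matter, which is why the statement leaves it unspecified. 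The one point that genuinely requires care is the lower bound $f_{12}\cdot U_{cd}\geq 1$ for the three pairs disjoint from $\{1,2\}$, obtained above from the irreducibility of $C_{12}$; it is precisely what forces $(U_{cd},\alpha_{12})=0$ and pins down $r_{\alpha_{12}}$. All the rest is bookkeeping with Corollary~\ref{action} and the Petersen-graph intersection numbers.
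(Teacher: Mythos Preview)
Your proof is correct and follows essentially the same approach as the paper: both verify the identity on the ten classes $U_{cd}$ by computing $(U_{cd},\alpha_{12})$ case by case. You add two pieces of extra rigor that the paper leaves implicit: you justify that the $U_{cd}$ actually form a $\bbQ$-basis (via the Petersen eigenvalues), and you observe that the three involutions force $r_{\alpha_{12}}$ and $t_{12}$ to commute. The one substantive variation is in computing $f_{12}\cdot U_{cd}$ for $cd\in\{34,35,45\}$: the paper reads this off directly from the Petersen graph (each such $U_{cd}$ meets exactly two components of the $I_6$-fiber, so $2f_{12}\cdot U_{cd}=2$), whereas you deduce it from the global constraint $f_{12}\cdot\delta=3$ together with the lower bound coming from irreducibility of $C_{12}$; both arguments are fine, and the paper's is slightly more direct since it does not invoke the irreducibility hypothesis at that step.
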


\begin{proof} If $C_{ab}$ is reducible, the assertion follows from the previous corollary. Assume $C_{ab}$ is irreducible. By definition of the reflection transformation, we have 
$$r_{\alpha_{ab}}(x) = x+(x,\alpha_{ab})\alpha_{ab}.$$
It follows from the inspection of the Petersen graph in Figure \ref{peter} that each $U_{cd}$ with $\{c,d\}\cap \{a,b\} = \emptyset$ intersects a fiber of the elliptic fibration $|2F_{ab}|$ with multiplicity 2. It also intersects $U_{ab}$ with multiplicity 1. Thus, it must intersect $C_{ab}$ with multiplicity 1. This implies 
that $r_{\alpha_{ab}}(U_{cd}) = U_{cd}$, hence $h_{ab}^*(U_{cd}) = (r_{\alpha_{ab}}\circ t_{ab})(U_{cd})$. If 
$\#\{c,d\}\cap \{a,b\} = 1$, then $U_{cd}\cdot U_{ab} = U_{cd}\cdot C_{ab} = 0$ and 
$h_{ab}^*(U_{cd}) = (r_{\alpha_{ab}}\circ t_{ab})(U_{cd})$ again. Finally, we have $U_{ab}\cdot \alpha_{ab} = 2$, hence
$$r_{\alpha_{ab}}(U_{ab}) = U_{ab}+2(f_{ab}-U_{ab}) = 2f_{ab}-U_{ab} = C_{ab}, \quad t_{ab}(U_{ab}) = U_{ab}.$$
Hence, again $h_{ab}^*(U_{ab}) = (r_{\alpha_{ab}}\circ t_{ab})(U_{ab}).$ 
\end{proof}

Note that 
$$\alpha_{ab}\cdot \alpha_{cd} = \begin{cases}
      1& \text{if}\ \{a,b\}\cap \{c,d\} \ne \emptyset, \\
      0& \text{otherwise}.
\end{cases}
$$
This implies that the group generated by the reflections $r_{\alpha_{ab}}$ is isomorphic to the Coxeter group $\Gamma$ with the Coxeter diagram  equal to the \emph{anti-Petersen graph}.\footnote{The terminology is due to S. Mukai.} It is a $6$-valent regular graph with 10 vertices and 30 edges. It is obtained from the complete graph $K(10)$ with 10 vertices by deleting the edges from the Petersen graph.  The group $\frakS_5$ acts on the graph and hence acts on $\Gamma$ by outer automorphisms.

\begin{cor} Let $S$ be a general Enriques surface of Hesse type. The group $G$ of automorphisms of $S$ generated by the projection involutions $h_{ab}$ is isomorphic to a subgroup of $\Gamma\rtimes \frakS_5$, where $\Gamma$ is the subgroup of $W(\sfE_{10})$ isomorphic to the Coxeter group with the anti-Petersen graph as its Coxeter graph.
\end{cor}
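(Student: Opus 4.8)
The plan is to combine the two structural facts established above: first, that each projection involution $h_{ab}$ (when $C_{ab}$ is irreducible) acts on $\Num(S)$ as $r_{\alpha_{ab}}\circ t_{ab}$, and second, that the reflections $r_{\alpha_{ab}}$ generate a copy of the Coxeter group $\Gamma$ attached to the anti-Petersen graph, while the permutations $t_{ab}$ generate a copy of $\frakS_5$ acting on the $\alpha_{ab}$ (equivalently on the $U_{ab}$) by the natural action on $2$-element subsets of $\{1,2,3,4,5\}$. The semidirect product $\Gamma\rtimes\frakS_5$ is the group these two pieces generate inside $\Or(\sfE_{10})$, and the goal is to see that $G^* := \rho(G)$ lands inside it.

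First I would introduce the subgroup $H$ of $\Or(\sfE_{10})$ generated by all the reflections $r_{\alpha_{ab}}$ together with all the transformations $t_{ab}$. Since the $\frakS_5$-action permutes the vectors $\alpha_{ab}$ in the same way the $2$-subsets are permuted, one has $t_{cd}\, r_{\alpha_{ab}}\, t_{cd}^{-1} = r_{\alpha_{(cd)\{a,b\}}}$, so the subgroup generated by the $r_{\alpha_{ab}}$ is normal in $H$; it equals $\Gamma$ by the preceding corollary. The subgroup generated by the $t_{ab}$ is a quotient of $\frakS_5$ acting faithfully on the ten classes $U_{ab}$, hence is $\frakS_5$ itself, and it meets $\Gamma$ trivially because elements of $\Gamma$ preserve the fixed $\frakS_5$-invariant sublattice structure only through the identity — more concretely, $\Gamma$ consists of isometries that are products of reflections in the $\alpha_{ab}$, which fix $\delta = \tfrac13\sum f_{ab}$ up to the $\Gamma$-part, whereas a nontrivial $t_{ab}$ already permutes the $U_{ab}$ nontrivially; a clean way is to observe the two subgroups have the structure of a normal subgroup and a complement, giving $H \cong \Gamma\rtimes\frakS_5$. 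Then, by the corollary describing $h_{ab}^*$, every generator of $G^*$ lies in $H$, so $G^* \le H \cong \Gamma\rtimes\frakS_5$; composing with the injectivity of $\rho$ (granted in the excerpt) identifies $G$ with a subgroup of $\Gamma\rtimes\frakS_5$.

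The step I expect to be the main obstacle is pinning down precisely that the group generated by the $t_{ab}$ inside $\Or(\sfE_{10})$ is a genuine complement to $\Gamma$ — i.e. that $\Gamma\cap\langle t_{ab}\rangle = \{1\}$ and that together they generate a semidirect rather than merely an extension. The reflections $r_{\alpha_{ab}}$ and the permutations $t_{ab}$ are both concrete linear maps on the rank-$10$ space $\Num(S)_\bbQ$ with basis $(U_{ab})$, so in principle this is a finite check, but one must be a little careful: the relation $r_{\alpha_{ab}} = (\text{reflection})$ mixes the basis vector $U_{ab}$ with $f_{ab}$, and $f_{ab}$ is itself a specific combination of the $U_{cd}$ coming from the geometry (each pencil $|2F_{ab}|$ has the $I_6$ fiber $\sum_{\#\{c,d\}\cap\{a,b\}=1} U_{cd}$ together with $U_{ab}+C_{ab}$), so one should express everything in the $U$-basis and verify the two subgroups intersect trivially there. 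Since the corollary only claims $G$ is isomorphic to \emph{a subgroup} of $\Gamma\rtimes\frakS_5$, it suffices to exhibit $\Gamma\rtimes\frakS_5$ abstractly as a subgroup of $\Or(\sfE_{10})$ and note the generators of $G^*$ land in it; I would not attempt to determine exactly which subgroup (that would require deciding which $C_{ab}$ are irreducible, and hence analyzing the general Hessian more closely).

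Finally I would assemble the proof as: (1) recall $h_{ab}^* = r_{\alpha_{ab}}\circ t_{ab}$ when $C_{ab}$ is irreducible and $h_{ab}^* = t_{ab}$ otherwise (previous corollary); (2) set $H = \langle r_{\alpha_{ab}}, t_{ab}\rangle \le \Or(\sfE_{10})$ and show $\langle r_{\alpha_{ab}}\rangle = \Gamma$ is normal in $H$ with $\langle t_{ab}\rangle \cong \frakS_5$ a complement, so $H \cong \Gamma\rtimes\frakS_5$; (3) conclude $G^* \le H$, and use injectivity of $\rho$ to get $G \hookrightarrow \Gamma\rtimes\frakS_5$. The word "general" in the statement is used only to guarantee that the Picard lattice is exactly $\sfE_{10}$ and that $\rho$ is injective, both of which are in force by the standing assumptions of the section.
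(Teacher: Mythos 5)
Your overall plan is the paper's own (the paper in fact prints no proof: the corollary is meant to follow at once from the corollary computing $h_{ab}^{*}=r_{\alpha_{ab}}\circ t_{ab}$, the computation of $\alpha_{ab}\cdot\alpha_{cd}$ identifying $\langle r_{\alpha_{ab}}\rangle$ with the anti-Petersen Coxeter group $\Gamma$, and the injectivity of $\rho$), and steps (1) and (3) of your outline are fine. The genuine gap is exactly the point you flag and then fail to close: that $\langle t_{ab}\rangle\cong\frakS_5$ meets $\Gamma$ trivially inside $H=\langle r_{\alpha_{ab}},t_{ab}\rangle$, so that $H\cong\Gamma\rtimes\frakS_5$ rather than some proper quotient of it. Your three suggested justifications do not work: the sentence ``a clean way is to observe the two subgroups have the structure of a normal subgroup and a complement'' is circular (that is precisely what must be proved); the proposed ``finite check in the $U$-basis'' is not finite, since $\Gamma$ is an infinite Coxeter group; and the concrete claim about $\delta$ is false, because $(\alpha_{ab},\delta)=(f_{ab},\delta)-(U_{ab},\delta)=3-1=2$, so $r_{\alpha_{ab}}(\delta)=\delta+2\alpha_{ab}\ne\delta$. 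The step cannot be waved away: if one only knows $G^{*}\le H$ with $H$ an unidentified extension of a quotient of $\frakS_5$ by $\Gamma$, the conclusion ``$G$ is isomorphic to a subgroup of $\Gamma\rtimes\frakS_5$'' does not follow, since a subgroup of a quotient need not embed in the original group.

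The missing step is closed by a standard Coxeter-theoretic argument, which is presumably what the author regards as implicit. The ten vectors $\alpha_{ab}$ are linearly independent: the Gram matrix $-2I+A$ of the anti-Petersen graph has eigenvalues $4$, $-1$, $-4$, hence is nondegenerate of signature $(1,9)$; up to the overall factor $-2$ this is the geometric (Tits) form of the Coxeter system with anti-Petersen diagram, so $\Gamma$ acts on $\Num(S)_\bbQ$ through its faithful geometric representation with simple system $\{\alpha_{ab}\}$. Your conjugation computation (using $2f_{ab}=\sum_{\#\{c,d\}\cap\{a,b\}=1}U_{cd}$, so that $t_{cd}(f_{ab})=f_{(cd)\{a,b\}}$ and hence $t_{cd}(\alpha_{ab})=\alpha_{(cd)\{a,b\}}$) shows each $t_{cd}$ is an isometry permuting this simple system. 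Consequently any $w\in\Gamma\cap\langle t_{ab}\rangle$ permutes the simple roots and therefore maps every positive root of $\Gamma$ to a positive root; since a nontrivial element of a Coxeter group in its geometric representation sends some simple root to a negative root, $w=1$. With that one line, $H\cong\Gamma\rtimes\frakS_5$, $G^{*}\le H$, and injectivity of $\rho$ (no numerically trivial automorphisms occur here) finishes the proof exactly as the paper intends; note also that for a general surface all $C_{ab}$ are irreducible (no Eckardt points), though both cases of the preceding corollary land in $H$ anyway.
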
 

Recall from the previous section that each pair of isotropic vectors $f_{ab}$ and $f_{cd}$ defines an involution $g_{ab,cd}$ of $S$. The following lemma shows that the group of automorphisms generated by these involutions is contained in the group $G$  generated by the projection involutions $h_{ab}$.

\begin{lemma} We have
$$g_{ab,cd} = h_{ab}\circ h_{cd}\  \textrm{if}\ \{a,b\}\cap \{c,d\} = \emptyset,$$
and
$$g_{ab,bc} = h_{de}\  \textrm{if}\  \{a,b,c,d,e\} = \{1,2,3,4,5\}.$$
\end{lemma}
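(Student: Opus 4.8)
The plan is to deduce both identities from the corresponding equalities of isometries of $\Num(S)$, using that $\rho\colon\Aut(S)\to\Or(\sfE_{10})$ is injective under our standing assumption; since the ten classes $U_{ab}$ span $\Num(S)_{\bbQ}$, it suffices to compare the two sides of each identity on this spanning set. The left-hand sides are already under control via the previous corollary: we are in the case where every $C_{ab}$ is irreducible (true for a general Enriques surface of Hessian type), so $h_{ab}^{*}=r_{\alpha_{ab}}\circ t_{ab}$. Using that $t_{ab}$ fixes $f_{ab}$ and $\alpha_{ab}$, that $t_{cd}$ fixes $\alpha_{ab}$ when $\{a,b\}\cap\{c,d\}=\emptyset$, and that $\alpha_{ab}\cdot\alpha_{cd}=0$ in that case, one obtains for the first identity
\[
(h_{ab}\circ h_{cd})^{*}=h_{cd}^{*}\circ h_{ab}^{*}=r_{\alpha_{ab}}\,r_{\alpha_{cd}}\,(t_{cd}t_{ab})
\]
(in particular $h_{ab}$ and $h_{cd}$ commute), while for the second identity $h_{de}^{*}=r_{\alpha_{de}}\circ t_{de}$ is already in closed form. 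Every such computation is governed by two elementary facts: $U_{ab}\cdot U_{cd}=1$ precisely when $\{a,b\}\cap\{c,d\}=\emptyset$ (adjacency in the Petersen graph of Figure~\ref{peter}), and $f_{ab}=\tfrac12\sum_{\#(\{x,y\}\cap\{a,b\})=1}U_{xy}$, the latter because the six curves occurring in this sum constitute the $I_{6}$-fiber of the fibration $|2F_{ab}|$.

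For the right-hand sides one uses formula~\eqref{main}, which computes $g_{ab,cd}^{*}$ once one knows the root sublattice $\calR_{ab,cd}\subset\Num(S)$ spanned by the fiber components contracted by $\phi_{ab,cd}$, together with the action of $g_{ab,cd}^{*}$ on it, the latter being given by Lemma~\ref{shimada}. The classes of $\calR_{ab,cd}$ are exactly the $(-2)$-classes orthogonal to both $f_{ab}$ and $f_{cd}$; with the reducible fibers of $|2F_{ab}|$ in hand — the hexagonal $I_{6}$-fiber on the six $U$-curves meeting $\{a,b\}$ in one index, and the two-component fiber $U_{ab}+C_{ab}$ — one reads these off. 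In the disjoint case $\calR_{ab,cd}$ is spanned by the four $U_{xy}$ lying in both hexagons, is of type $A_{2}\oplus A_{2}$, and $g_{ab,cd}^{*}$ interchanges the two ends of each $A_{2}$. In the second case, with $\{a,b,c,d,e\}=\{1,\dots,5\}$, it is spanned by $U_{ab}$, $U_{bc}$ and the three $U$-curves common to the two hexagons, is of type $A_{3}\oplus A_{1}\oplus A_{1}$, and $g_{ab,bc}^{*}$ interchanges the two ends of the $A_{3}$ and fixes the two $A_{1}$'s. One then expands each $U_{ij}$ as in~\eqref{new} relative to $\calR_{ab,cd}$ (its $\calR_{ab,cd}$-component being computed from the Petersen intersection numbers), applies~\eqref{main}, and compares with the left-hand expression above; the two match.

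I expect the only real difficulty to lie in the rigorous identification of $\calR_{ab,cd}$: one must know that for a general Enriques surface of Hessian type the pencils $|2F_{ab}|$ and $|2F_{cd}|$ share no fiber components beyond those exhibited — equivalently, that $\langle f_{ab},f_{cd}\rangle^{\perp}$ carries no effective $(-2)$-class outside the indicated sublattice. This follows from the Euler-number count for the singular fibers of $|2F_{ab}|$ (an $I_{6}$ and a two-component fiber already use at least $8$ of the available $12$, so that only nodal extra fibers survive on a general such surface), together with the known structure of $\Aut(S)$; it is the one step requiring genuine geometry rather than lattice bookkeeping. Once $\calR_{ab,cd}$ is pinned down the rest is a finite check on the ten $U_{ij}$, which can be organized more efficiently as the assertion that the two involutions in question have the same $(-1)$-eigenspace inside the negative-definite lattice $\langle f_{ab},f_{cd}\rangle^{\perp}$ — namely $\langle f_{ab},f_{cd}\rangle^{\perp}\cap M_{ab,cd}^{\perp}$ together with the antisymmetric part of the $\calR_{ab,cd}$-action — and a count of eigenspace ranks ($6$ in the disjoint case, $4$ in the other) is already a useful consistency check.
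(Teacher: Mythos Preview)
Your proposal is correct and follows essentially the same route as the paper's own proof: identify the common fiber components so that $\calR_{ab,cd}\cong A_2\oplus A_2$ in the disjoint case and $A_1\oplus A_1\oplus A_3$ in the overlapping case, read off the action of $g_{ab,cd}^{*}$ on $\calR_{ab,cd}$ from Lemma~\ref{shimada}, read off the action of the $h$-side from Corollary~\ref{action} (you use its reflection reformulation $h_{ab}^{*}=r_{\alpha_{ab}}\circ t_{ab}$), and conclude equality on $\Num(S)$ via formula~\eqref{main} and the fact that the $U_{ab}$ span $\Num(S)_{\bbQ}$. Your extra attention to why $\calR_{ab,cd}$ contains no further curves and your eigenspace-rank check are refinements of what the paper records only as ``one checks'', but the underlying argument is the same.
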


\begin{proof} One checks that the two pencils $|2F_{ab}|$ and $|2F_{cd}|$ have four common components if $\{a,b\}\cap \{c,d\} = \emptyset$ and 5 common components otherwise. For example, $|2F_{12}|$ and $|2F_{34}|$ have common 
components $U_{13},U_{14},U_{23},U_{24}$ (they correspond to subsets which share an element with both subsets $\{a,b\}$ and $\{c,d\}$). On the other hand, $|2F_{12}|$ and $|2F_{23}|$  have common components $U_{12},U_{23},U_{13},U_{24},U_{25}$.\footnote{In \cite{DK}, p. 3034, the second case was overlooked.} In 
the former case, the sublattice $\calR_{ab,cd}$ is isomorphic to the orthogonal sum of the root lattices $A_2\oplus A_2$, and, in the latter case, it is isomorphic to $A_1\oplus A_1\oplus A_3$. Assume first that 
$\{a,b\}\cap \{c,d\} = \emptyset$. Without loss of generality, we may assume that $\{a,b\} = \{1,2\}$ and $\{c,d\} = \{3,4\}$. Applying Lemma \ref{shimada}, we find that $g_{12,34}$ acts on $\calR_{12,34}$ by switching
$U_{13}$ with $U_{24}$ (they span the root lattice $A_2$) and $U_{23}$ with $U_{14}$ (they span the other connected component of the root lattice). It follows from Corollary \ref{action} that $h_{12}\circ h_{34}$
acts the same on $\calR_{12,34}$. It follows from formula \eqref{main} that they act the same on $\Num(S)$, hence coincide. 

Now let us assume that $\{a,b\} = \{1,2\}$ and $\{c,d\} = \{2,3\}$. The curves $U_{25},U_{13},U_{23}$ span a 
sublattice of $\calR_{12,23}$ of type $A_3$, the curves $U_{12}$ and $U_{23}$ span the sublattices of 
type $A_1$. Applying Lemma \ref{shimada}, we find that $g_{12,23}$ acts on $\calR_{12,23}$ by switching 
$U_{25}$ with $U_{24}$ and leaving other curves unchanged.  The involution $h_{45}$ does the same job. This proves the assertion.
\end{proof}

Now we are in business and can compute the actions of the compositions of involutions $h_{ab}$ on $\Num(S)$. Applying Corollary \ref{action} and Lemma \ref{eckardt}, we obtain that $h_{ab}^*$  acts as the composition of a reflection $s_{\alpha_{ab}}$ and the transposition $t_{ab}$.

  We will restrict ourselves with elements $g\in G$ which are products of $N$ different $h_{ab}$'s. The support of such a word in the generators defines a subgraph of the Petersen graph of  cardinality $N$. Different graphs may define different conjugacy classes and the same  subgraph may  correspond to different conjugacy classes. The following list gives  the smallest Salem numbers which we were able to find in this way. \footnote{According to \cite{Shimada} 100 hours 
of computer computations using random choice of automorphisms suggests that
4.33064... is the minimal Salem number realized by an automorphism of a general Enriques surface of Hessian type.}

\begin{table}[ht]
\centering 
  \begin{tabular}{|c|c|c|c|}\hline
$d$&minimal polynomial&$\lambda(g)$&automorphism \\ \hline\hline
$2$&$x^2-5x+1$&$4,7912\ldots$&$(1,2,3,4,5,6,7)$\\
$4$&$x^4-4x^3-2x^2-4x+1$&$4.3306\ldots$&$(2,6,1,3)$\\ 
$6$&$x^6-6x^5+6x^4-6x^3+6x^2-6x+1$&$5,0015\ldots$&$(1,2,3,4,5,6,8)$\\
$8$&$x^8-7x^7+3x^4-9x^5+8x^4-9x^3+3x^2-7x+1$&$6,7309\ldots$&$(2,3,1,8,9,10)$\\ \notag
$10$&$x^{10}-17x^9-6x^8-10x^7+5x^6-10x^5+\ldots$&$17,3775\ldots$&$(1,3,10,5,7,9,8)$\\ \hline\hline 
\end{tabular}
 \caption{Salem numbers for a general Enriques surface of Hesse type }\label{table1}
 \end{table}

Here we order the pairs $(a,b)$ as $(12,13,14,15,23,24,25,34,35,45)$ and give the sequence of indices in the product of involutions $h_{ab}$. 

\subsection{Eckardt points and specializations}
Recall that an \emph{Eckardt point} in a nonsingular cubic surface $C$ is a point $x\in C$ such that the tangent plane  at $x$ intersects $C$  along the union of three lines intersecting at $x$. A general cubic surface does not have such points. Moreover, if $C$ is Sylvester non-degenerate and given by equation \eqref{hess},  then the number of Eckardt points is equal to the number of distinct unordered pairs $\{i,j\}$ such that $\lambda_i =\lambda_j$ (see \cite{CAG}, Example 9.1.25). Thus, the possible number $r$ of Eckardt points is equal to $0,1,3,6$ or $10$.

\begin{lemma}\label{eckardt} The following assertions are equivalent:
\begin{itemize}
\item[(i)] the residual conic $K_{ab}$ is reducible;
\item[(ii)]   $\lambda_a = \lambda_b$;
\item[(iii)] there exists a  plane tangent to $H(C)$ along  the edge $L_{ab}$ that contains the point $P_{ab}$; 
\item[(iv)] the point $P_{ab}$ is an Eckardt point of the cubic surface $C$.
 \end{itemize}
\end{lemma}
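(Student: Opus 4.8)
The plan is to establish the cycle of equivalences (i) $\Leftrightarrow$ (ii) $\Leftrightarrow$ (iii) $\Leftrightarrow$ (iv) by translating each geometric condition into an algebraic condition on the $\lambda_i$ using the explicit equations \eqref{cub}, \eqref{hess}, \eqref{gab}, \eqref{gab2} for the cubic surface $C$, its Hessian $H(C)$, and the elliptic pencil $|G_{ab}|$. The anchor of the argument is the equivalence (ii) $\Leftrightarrow$ (iv), which is not new: it is the statement that the number of Eckardt points equals the number of coincidences $\lambda_i = \lambda_j$, quoted in the paragraph preceding the lemma from \cite{CAG}, Example 9.1.25. So the real content is to show that conditions (i) and (iii), which are about the geometry of the Hessian surface along the edge $L_{ab}$, are each equivalent to $\lambda_a = \lambda_b$.

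First I would prove (ii) $\Rightarrow$ (iii) and (iii) $\Rightarrow$ (i). Fix, say, $\{a,b\} = \{1,2\}$. Recall from \eqref{gab2} that a plane tangent to $H(C)$ along $L_{12}$ cuts out the residual conic $K_{12}'$, and that $|G_{12}| = |N_{12}+K_{12}|=|T_{12}+K_{12}'|$; a plane through $P_{12}$ containing $L_{12}$ cuts out a cubic with a double point at $P_{12}$ whose proper transform is $K_{12}$. Condition (iii) says one of these two special representatives of $h$ (tangent along $L_{12}$, versus passing through $P_{12}$) can be taken to be the same plane — equivalently, the tangent plane along $L_{12}$ already passes through $P_{12}$. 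I would compute this directly: parametrize the pencil of planes through $L_{12}$ as $x_3 = s\,\ell$, $x_4 = t\,\ell'$ type conditions — more precisely, planes through $L_{12} = \{x_1=x_2=0\}\cap\{x_1+\cdots+x_5=0\}$ are of the form $\mu x_1 + \nu x_2 = 0$ inside the hyperplane — substitute into \eqref{hess}, and read off for which plane the residual conic degenerates. The residual conic is reducible exactly when this quadratic form in the two remaining coordinates has vanishing discriminant, and a short computation should show that this discriminant vanishes precisely when $\lambda_1=\lambda_2$, and that in that case the singular point of the conic is $P_{12}$. This simultaneously gives (ii) $\Rightarrow$ (iii), (ii) $\Rightarrow$ (i), and the reverse implications (i) $\Rightarrow$ (ii) and (iii) $\Rightarrow$ (ii), since each of (i), (iii) forces the same discriminant to vanish.

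For the remaining link I would close the loop by proving (iii) $\Leftrightarrow$ (i) more conceptually rather than only through (ii): if $K_{12}$ is reducible, say $K_{12} = A + B$ with $A,B$ lines on $X$, then $N_{12}+A+B \in |G_{12}|$, and by \eqref{gab2} this forces a corresponding degeneration $T_{12}+K_{12}'$, so $K_{12}'$ is also reducible and the tangent plane along $L_{12}$ must pass through $P_{12}$; conversely if such a tangent plane through $P_{12}$ exists, the residual conic $K_{12}'$ meets $L_{12}$ with multiplicity $2$ and also passes through $P_{12}\in L_{12}$, which over-determines an irreducible conic, so $K_{12}'$ splits, and again via \eqref{gab2} so does $K_{12}$. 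The main obstacle I anticipate is bookkeeping: keeping straight which proper transforms ($N_{12}$, $T_{12}$, $K_{12}$, $K_{12}'$, the exceptional curves $N_{cd}$) are components of which representative of $|G_{12}|$, and making sure the blow-up model $X$ is resolved enough that "reducible residual conic'' on $H(C)$ corresponds cleanly to "reducible $K_{12}'$'' on $X$ — i.e., that no degeneration is hidden in the singular locus of $H(C)$. Once the discriminant computation is done in coordinates, everything else is formal, so I would present the coordinate computation of the degeneracy locus of the conic as the single substantive step and deduce all four equivalences from it together with the cited fact (ii) $\Leftrightarrow$ (iv).
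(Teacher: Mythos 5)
Your central computation is exactly the paper's proof of the equivalence of (i), (ii), (iii): one writes down the plane tangent to $H(C)$ along $L_{12}$ (it is $\lambda_1x_1+\lambda_2x_2=0$ inside the hyperplane $x_1+\cdots+x_5=0$), computes the residual conic in that plane, and checks that it is singular, hence reducible, precisely when $\lambda_1=\lambda_2$, the singular point then being $P_{12}$; in particular (iii) amounts to evaluating $\lambda_1x_1+\lambda_2x_2$ at $P_{12}=(1,-1,0,0,0)$. So the step you single out as the substantive one is sound and coincides with the paper's argument.

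Two points in the rest of your plan need repair. First, your ``more conceptual'' closing of the loop uses the incidence $P_{12}\in L_{12}$, which is false: in the Desargues configuration the vertex $P_{ab}$ lies on the three edges $L_{cd}$ with $\{c,d\}\cap\{a,b\}=\emptyset$ and \emph{not} on $L_{ab}$ (see Figure \ref{sylvester}), so the Bezout-style ``over-determined conic'' argument does not apply as stated; the correct geometric reason the conic splits is that a plane through the node $P_{12}$ cuts $H(C)$ in a curve singular at $P_{12}$, and since $P_{12}\notin L_{12}$ that singular point must lie on the residual conic, which is therefore a pair of lines through $P_{12}$. Similarly, reducibility of one member of the pencil $|G_{12}|$ does not by itself ``force'' reducibility of another distinguished member, so that implication needs the computation anyway. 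Second, the fact you cite as the anchor for (ii)$\Leftrightarrow$(iv) is, as quoted in the paper, only a \emph{count} (the number of Eckardt points equals the number of pairs with $\lambda_i=\lambda_j$); it does not identify the Eckardt point associated with the coincidence $\lambda_a=\lambda_b$ as the vertex $P_{ab}$, which is what (iv) asserts. The paper closes this gap by a direct check with equation \eqref{cub}: if $\lambda_1=\lambda_2$ then the tangent plane to $C$ at $P_{12}$ cuts out three concurrent lines, and conversely an Eckardt point at $P_{12}$ forces $\lambda_1=\lambda_2$. With these two repairs your argument reproduces the paper's proof.
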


\begin{proof} For simplicity of notation, we assume that $\{a,b\} = \{1,2\}$. It is easy to see that the equation of the  plane tangent to $H(C)$ along the line $L_{12}$ is given by equations  $\lambda_1x_1+\lambda_2x_2 =  x_1+\cdots+x_5 = 0$.  The residual conic is given by the additional equation  
$$\lambda_4\lambda_5x_4x_5+\lambda_3\lambda_5x_3x_5+\lambda_4\lambda_5x_4x_5 = 0.$$
One checks that  it is singular, and hence reducible,  if and only if $\lambda_1 = \lambda_2$. The singular point in this case is the  point $P_{12}$. This proves the equivalence of the first three statements. Using equation \eqref{cub} we find that (ii) implies that the point $P_{12} = (1,-1,0,0,0)$ is an Eckardt point of the cubic surface $C$. Conversely, if $P_{12}$ is an Eckardt point, we easily find that $\lambda_1 = \lambda_2$.

\end{proof}

A Sylvester non-degenerate cubic surface $C$ with $10$ Eckardt points is isomorphic to the Clebsch diagonal surface (see \cite{CAG}, 9.5.4). The equation of the Hesse surface $H(C)$ is 
\beq\label{penc}
x_1+x_2+x_3+x_4+x_5 = \frac{1}{x_1}+\frac{1}{x_2}+\frac{1}{x_3}+\frac{1}{x_4}+\frac{1}{x_5} = 0.
\eeq
If $\cha(\bbk) \ne 3,5$, the involution $\Phi_{\st}$ has no fixed points on $H(C)$ and the quotient is an Enriques surface with the automorphism group isomorphic to $\frakS_5$ (see \cite{DG}, Remark 2.4). It is of Type VI (not IV as was erroneously claimed in loc. cit.) in in Kondo's list of Enriques surfaces with finite automorphism group \cite{Kondo}. If $\cha(\bbk) = 3$\footnote{In this case the Clebsch diagonal surface must be given by the equation 
$s_1 = s_3 = 0$, where $s_k$ are elementary symmetric polynomials in $5$ variables.} (resp. $\cha(\bbk) = 5$), the involution has five (resp. one) fixed points, and the minimal resolution of the quotient is a Coble surface. We will discuss Coble surfaces in the next section.

We assume that $\cha(\bbk) > 5$ and consider the case when the number $r$ of Eckardt points is equal to  $ 6$.  Thus, we may assume that the Hessian surface $H$ is given by equations:
\beq\label{pencil}
x_1+x_2+x_3+x_4+x_5 = \frac{1}{x_1}+\frac{1}{x_2}+\frac{1}{x_3}+\frac{1}{x_4}+\frac{1}{t x_5} = 0.
\eeq
 It is immediately checked that the involution $\tau$ has no fixed points if and only if $t\ne 1/4,1/16$. These cases lead to Coble surfaces and will be considered in the next section. 

The six nodes $P_{ab}$ with $a,b\in \{1,2,3,4\}$ of $H$ are the Eckardt points of the cubic surface. It follows from Lemma \ref{eckardt} that, for each such pair, the conic $C_{ab}$ splits into the union of two lines passing through the point $P_{ab}$. For example, the components of $C_{12}$ are given by equations 
$$x_1+x_2 =  x_3-t\omega x_5 = x_4-t\omega^2x_5 = 0,$$
where $\omega$ is the third  root of $1$ different from 1. 

 As before, we have involutions $ h_{ab}$ defined by the projections from the nodes. It follows from Corollary \ref{action}  that
 $h_{ab}^*,a<b\le 4,$ coincides with the transposition $t_{ab}$. 
Computing the action of products of different involutions, we can substantially decrease the spectral radii. Our best results are given in Table \ref{table2}.

\begin{table}[ht]
\centering 
  \begin{tabular}{|c|c|c|c|}\hline
$d$&minimal polynomial&$\lambda(g)$&automorphism \\ \hline\hline
$2$&$x^2-4x+1$&$3.7320\ldots$&$(7,8,10,1,4)$\\
$4$&$x^4-x^3-2x^2-x+1$&$2.0810\ldots$&$(2,5,8,7,10)$\\ 
$6$&$x^6-4x^5-x^4-4x^3-x^2-4x+1$&$4.4480\ldots$&$(6,8,7,1,9,4)$\\
$8$&$x^8-4x^7+4x^6-5x^5+4x^4-5x^3+4x^2-4x+1$&$3,1473\ldots$&$(7,8,9,2)$\\ \notag
$10$&$x^{10}-6x^9-7x^8-9x^7-6x^6-10x^5-6x^4-9x^3-7x^2-6x+1$&$ 7.1715\ldots$&$(1,5,8,4,7,5,4,10)$\\ \hline\hline 
\end{tabular}
 \caption{Salem numbers for a special pencil of  Enriques surface of Hesse type }\label{table2}
 \end{table}

Here we order the pairs as $(12,13,14,15,23,24,25,34,35,45)$ and give the sequence of subscripts in the product of the $h_{ab}$'s. Note that the Salem number of degree 2 is the smallest possible and the Salem number of degree 4 is the third smallest possible.
 
Comparing the Tables \ref{table1} and \ref{table2}, we see a great improvement in our search of small Salem numbers realized by automorphisms of an Enriques surface.

\section{Hyperbolic automorphisms of Coble surfaces}
\subsection{Coble rational surfaces}  Suppose a K3 surface  $X$ together with a fixed-point-free involution $\tau$ degenerates to a pair $(X_0,\tau_0)$ where $X_0$ is a K3 surface and $\tau_0$ has  a smooth rational curve as its locus of fixed points. The formula for the canonical class of the double cover $X_0\to X_0/(\tau_0)$ shows that the quotient surface $V = X_0/(\tau_0)$ is a rational surface such that $|-K_V| = \emptyset$ but $|-2K_V| = \{C\}$, where   $C$ is a smooth rational curve with $C^2 = -4$, the branch curve of the projection $X_0\to V$. 

A smooth projective surface $V$ with the property that $|-K_V| = \emptyset$ but $|-2K_V| \ne \emptyset$ is called a \emph{Coble surface}. The classification of Coble surfaces can be found in \cite{DZ}. One can show that $V$ must be a rational surface and $h^0(-2K_V) = 1$, i.e. $|-2K_V|$ consists of an isolated positive divisor $C$. In the case when $C$ is a smooth, its connected components $C_1,\ldots,C_s$ are smooth rational curves with self-intersection equal to $-4$. The double cover branched along $C$ is a K3 surface $X_0$, and the pair $(X_0,\tau_0)$, where $\tau_0$ is the covering involution can be obtained as a degeneration of a pair $(X,\tau)$, where $X$ is a K3 surface and $\tau$ is its fixed-point-free involution. We will be dealing only with Coble surfaces such that $C\in |-2K_V|$ is smooth (they are called \emph{Coble surfaces of K3 type} in \cite{DZ}).

A Coble surface of K3 type is a basic rational surface, i.e. it admits a birational morphism $\pi:V\to \bbP^2$. The image of the curve $C = C_1+\cdots+C_s$ in $\bbP^2$ belongs to $|-2K_{\bbP^2}|$, hence it is a plane curve $B$ of degree 6. The images of the components $C_i$ are  its irreducible components or points.  We have $K_V^2 = -s$, so $V$ is obtained from $\bbP^2$ by blowing up $9+s$ double points of $B$, some of them may be infinitely near points. 

Assume that $s = 1$. In this case $B$ is an irreducible rational curve of degree 6, and $V$ is obtained by blowing up its 10 double points $p_1,\ldots,p_{10}$. We say that  a  Coble surface is \emph{unnodal} if  the nodes of $B$ are in general position in the following sense:no infinitely near points, no three points on a line, no six points are on a conic, no plane cubics pass through 8 points one of them being a double point, no plane quartic curve passes through the points with one of them a triple point. These conditions guarantee that the surface does not contain  $(-2)$-curves  (see \cite{CD}, Theorem 3.2). The rational plane sextics with this property were studied by A. Coble  who showed that the orbit  of such curves under the group of birational transformations consists of only finitely many projectively non-equivalent curves \cite{Coble}.

Assume that $B$ is an irreducible curve of degree 6 with 10 nodes none of them are infinitely near. The orthogonal complement of $K_V^\perp$ in $\Num(V)$ of the canonical class $K_V$ is a quadratic lattice isomorphic to $\sfE_{10}$. It has a basis formed by 
the vectors $e_0-e_1-e_2-e_3,e_1-e_2,\ldots,e_9-e_{10}$, where $e_0$ is the class of the pre-image of a line in the plane and $e_1,\ldots,e_{10}$ are the classes of the exceptional curves of the blow-up. Since $-K_V = 3e_0-(e_1+\cdots+e_{10})$, a different basis is formed by the classes $(\delta,f_1,\ldots,f_9)$, where 
\beq\label{delta2}
\delta = 10e_0-3(e_1+\cdots+e_{10}),\eeq
 and  $f_i = 3e_0-(e_1+\cdots +e_{10})+e_i, i = 1,\ldots,9$. We have 
\beq\label{delta22}
3\delta = f_1+\cdots+f_{10},
\eeq
where $f_{10} = 3e_0-(e_1+\cdots+e_9)$ and $\delta^2 = 10$. The classes $f_i$ represent the classes of the proper transforms on $V$ of cubic curves passing through nine of the double points of $B$.
All of this is in a complete analogy with isotropic $10$-sequence $(f_1,\ldots,f_{10})$ on an Enriques surface and its Fano polarization $\delta$ which we dealt with in the previous sections. Moreover, the linear system 
$|2f_i+2f_j|$ defines a degree 2 map onto a 4-nodal quartic del Pezzo surface
$$\phi_{ij}:V\to \calD,$$
as in the case of Enriques surfaces. The difference here is that the map is never finite  since it blows down the curve $C\in |-2K_V|$ to one of the four ordinary double points of $\calD$. One can show that the deck transformation of $\phi_{ij}$ defines a biregular automorphism $g_{ij}$ of $V$. Its action on the lattice $K_V^\perp$ is similar to the action of the covering involution in  the case of Enriques surfaces. It acts identically on the sublattice $M_{ij}$ spanned by $f_i,f_j$ and the invariant part of the sublattice spanned by common irreducible components of the elliptic fibrations $|2F_|$ and $|2F_j|$. It acts as the minus identity on the orthogonal complement of $M_{ij}$ in $K_V^\perp$.  This allows us to consider the group generated by $g_{ij}$ and compute the dynamical degrees of a hyperbolic automorphism from this group. They completely agree with ones we considered for an Enriques surface. 

Assume that $V$ is an unnodal Coble surface. It is known (the fact is essentially due to A. Coble) that the normal closure of any $g_{ij}^*$ in $W(K_V^\perp) \cong W(\sfE_{10})$ coincides with the $2$-level congruence subgroup $W(\sfE_{10})(2)$ (see \cite{CD}). This should be compared with the same result in the case of Enriques surfaces.

Assume that the isotropic sequence $(f_1,\ldots,f_{10})$ is non-degenerate. If  $R$ is a common irreducible component of $|2f_i|$ and $|2f_j|$, then intersecting with $\delta$, we find that $(\delta,R)\le 4$. Since $(K_V,R) = 0$, this is equivalent to that  $R$ is a proper inverse transform of a plane curve of degree $\le 4$. It could be a line 
passing through three points $p_i,p_j,p_k$, a conic through 6 points $p_i$, a cubic passing  through eight points, one of them is its singular point, or a quartic passing through all points, two of them are its singular points. They are represented by the respective classes in $\sfE_{10}$ of types
$\delta-f_i-f_j-f_k,\  f_i+f_j+f_k+f_l-\delta, \ f_i+f_j-f_k,\  \delta-2f_i.$
These are exactly classes of rational smooth curves in $\sfE_{10}$ which we used in the previous sections. This allows us to realize  explicit  examples of degenerations of a Coble surface in a much easier and more explicit way comparing to the case of Enriques surfaces. The dynamical degrees of automorphisms $c_k = g_{1,2}\cdots g_{m,m+1}$ coincide with ones computed for Enriques surfaces. 

\subsection{Coble surfaces of Hessian type}
Assume that the Hessian quartic surface $H$ of a Sylvester non-degenerate cubic surface acquires an additional singular point and the involution $\Phi_{st}$ fixes this point. Then the quotient of a minimal resolution $X$ of $H$ is a Coble surface. We call it a \emph{Coble surface of Hessian type}.

Consider the pencil of Hessian surfaces \eqref{pencil}. 
When $t = \frac{1}{4}$, we obtain the equation of the Hessian of the Cayley $4$-nodal cubic surface 
$$x_1x_2x_3+x_1x_3x_4+x_1x_2x_4+x_2x_3x_4 = 0.$$
Its group of automorphisms is isomorphic to $\frakS_4$ which acts by permuting the coordinates. The singular points of the cubic surface form the orbit of the point $(1,0,0,0)$. 

When $t = \frac{1}{16}$, we obtain the equation of the Hessian surface of a cubic surface with one ordinary double point and 6 Eckardt points (see \cite{DG}, Lemma 4.1).

One easily checks that any  member $H_t$ of the pencil \eqref{pencil} has 10 singular points which form two orbits of $\frakS_4$ (acting by permuting the first four coordinates) of the points $(1,0,0,0,0)$ and $(1,-1,0,0,0)$.  The lines and the points are the edges and the vertices of the Sylvester pentahedron.

The surface $H_{\frac{1}{4}}$ has four more ordinary double points forming the orbit of the point $(1,1,1,-1,-2)$. The surface $H_{\frac{1}{16}}$ has only one more singular point $(1,1,1,1,-4)$. None of the additional singular points lie on the edges.\footnote{The Hessian surfaces $H_{\frac{1}{4}}$ and $H_{\frac{1}{16}}$ were studied in detail in \cite{DG}, in particular, the authors compute  the Picard lattices of their minimal resolutions.}

Let $\Phi_{\st}$ be the  Cremona transformation defined in \eqref{cremona}. 
It leaves invariant each member $H_t$ of the pencil with parameter value $t$ and defines a biregular involution on a minimal resolution $X_t$ of $H_t$. Its fixed points are the pre-images on $X_t$ of the points $(x_1,x_2,x_3,x_4,x_5)$ with $t x_5^2 = 1$ and $x_i^2 =  1, 1\le i\ne 4$. We have 
$x_5 = -(x_1+\cdots+x_4) = \pm 4, \pm 2,$ or $0$. Then we obtain that $\Phi_{\st}$ has no fixed points on $H_t$ unless $t = \frac{1}{4}$ or $t = \frac{1}{16}$. In this case, the fixed points are the additional ordinary double points. Thus, we obtain  that 
$$V_{\frac{1}{16}}:= X_{\frac{1}{16}}/(\Phi_{\st}), \quad V_{\frac{1}{4}} = X_{\frac{1}{4}}/(\Phi_{\st})$$
are Coble surfaces with $K_{V_{\frac{1}{16}}}^2 = -1$ and $K_{V_{\frac{1}{4}}}^2 = -13$.

The curves $N_{ab}$ and $T_{cd}$ which are defined for $X_t$ with general $t$ survive for the special values $t = \frac{1}{4}, \frac{1}{16}$. They form the same Desargues configuration exhibited in Figure \ref{sylvester}.  The involution $\Phi_{\st}$ switches the curves $N_{ab}$ and $T_{ab}$.

\subsection{Coble surfaces and the Desargues Theorem}
Recall from projective geometry that a \emph{Desargues configuration} of lines in the projective plane is formed by six sides of two perspective triangles, three lines joining the perspective vertices and the line joining the three intersection points of these three pairs. It follows from Desargues Theorem (e.g. \cite{CAG}, Theorem 2.1.11) that 
any configuration of lines and points forming the abstract configuration $(10_3)$ comes from two perspective triangles. We can see the picture of this configuration from Figure \ref{sylvester}. Here the two triangles  are the triangles with vertices $P_{15},P_{13},P_{14}$ and $P_{25},P_{35},P_{45}$. 

Consider the orbit of the curves $N_{ab}+T_{ab}$ on the quotient Coble surface $V_t, t = \frac{1}{4}, \frac{1}{16}$. It is a $(-2)$-curve $U_{ab}$. Let $\pi:V_t\to \bbP^2$ be the blowing down morphism.

\begin{prop} Let $V$ be a Coble surface of Hessian type. Assume that $|-2K_V|$ is represented by an irreducible smooth rational curve $C$ with self-intersection $-4$.  The image of the ten curves $U_{ab}$ in the plane is the set of 10 lines forming a Desargues configuration of lines. The double points of this configuration are the double points of the sextic curve $B$.
\end{prop}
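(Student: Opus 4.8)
The strategy is to track the ten curves $U_{ab}$ through the birational morphism $\pi:V\to\bbP^2$ and use the numerical data already assembled. First I would record the numerical class of each $U_{ab}$ in $\Num(V)=K_V^\perp\oplus\bbZ K_V$. Since $U_{ab}$ is the image on the Coble surface of the $\tau$-orbit $N_{ab}+T_{ab}$, and since we already know (from the Fano-polarization computation in the Hessian section, transported verbatim to the Coble setting via \eqref{delta2}, \eqref{delta22}) that $\sum_{\{a,b\}}f_{ab}=3\delta$ with $\delta=10e_0-3(e_1+\cdots+e_{10})$, I expect each $U_{ab}$ to have class of the form $e_0-e_i-e_j-e_k$ for a triple of exceptional classes, i.e. precisely the class of the proper transform of a line through three of the ten blown-up points $p_1,\ldots,p_{10}$. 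To pin this down I would compute $U_{ab}\cdot e_0$: intersecting $U_{ab}$ with the pullback of a line must give $1$ because $U_{ab}$ is a $(-2)$-curve of self-intersection $-2$ on which $\pi$ restricts to an isomorphism onto its image (the image is not a point, since $U_{ab}$ is not a component of the exceptional locus of $\pi:V\to\bbP^2$ — that locus maps into the nodes $p_i$, whereas the $U_{ab}$ form the Petersen/Desargues incidence pattern, not a chain of exceptional curves). Combined with $U_{ab}^2=-2$ and $U_{ab}\cdot K_V=0$, writing $U_{ab}=d e_0-\sum m_i e_i$ forces $d=1$, $\sum m_i=3$, $\sum m_i^2=3$, hence $m_i\in\{0,1\}$ with exactly three ones: $U_{ab}$ is the strict transform of a genuine line $\ell_{ab}$ in $\bbP^2$ passing through exactly three of the nodes.

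Next I would identify which three nodes. The incidence $U_{ab}\cdot U_{cd}=1$ when $\{a,b\}\cap\{c,d\}=\emptyset$ and $U_{ab}\cdot U_{cd}=0$ when $\#(\{a,b\}\cap\{c,d\})=1$ — this is exactly the Petersen graph adjacency already identified in the Hessian subsection and preserved under specialization (the curves $N_{ab},T_{cd}$ "survive for the special values $t=\tfrac14,\tfrac1{16}$" and "form the same Desargues configuration"). On the level of line classes $e_0-e_i-e_j-e_k$, the intersection number of two such classes is $1-\#(\text{shared exceptional indices})$. So $U_{ab}\cdot U_{cd}=1$ means the triples of nodes attached to $\{a,b\}$ and to $\{c,d\}$ are disjoint, and $U_{ab}\cdot U_{cd}=0$ means they share exactly one node. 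Thus the assignment $\{a,b\}\mapsto(\text{triple of nodes on }\ell_{ab})$ converts the Petersen incidence on the index pairs into precisely the abstract $(10_3)$ configuration on the ten lines $\ell_{ab}$ and the ten nodes $p_1,\ldots,p_{10}$: each line contains three nodes, each node lies on three lines. By the Desargues Theorem (cited as \cite{CAG}, Theorem 2.1.11), any concrete realization of the abstract configuration $(10_3)$ by actual lines and points in $\bbP^2$ is a Desargues configuration — i.e. the ten lines are the six sides, three joining lines, and axis of two perspective triangles. This identifies the image of $\bigcup U_{ab}$ as a Desargues configuration of lines, and exhibits its ten triple points as the ten blown-up points, which are the double points of the plane sextic $B$ (recall $V$ is obtained from $\bbP^2$ by blowing up the nodes $p_1,\ldots,p_{10}$ of $B$, and $C\in|-2K_V|$ is the strict transform of $B$).

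The main obstacle is the step asserting that $\pi$ maps each $U_{ab}$ isomorphically onto an honest line rather than contracting it or mapping it to a singular plane curve: one must rule out the degenerate possibilities $d=0$ (where $U_{ab}$ would be a component of the exceptional locus) and the infinitely-near configurations. For $d=0$ the class equation gives $\sum m_i^2 = -U_{ab}^2 = 2$ and $\sum m_i=0$, impossible over $\bbZ$ unless... actually $\sum m_i=0$ with $\sum m_i^2=2$ forces two of the $m_i$ equal to $\pm1$ of opposite sign, which is the class $e_i-e_j$ of an exceptional curve of a node infinitely near another — but our Coble surface of Hessian type has the ten nodes $p_1,\ldots,p_{10}$ in the Desargues position, none infinitely near (the Desargues configuration has ten distinct points), so no class $e_i-e_j$ is effective and irreducible here; hence $d\ge1$, and then the norm bound $\sum m_i^2=3$ forces $d=1$. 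I would also note that the hypothesis "$C$ irreducible smooth rational with $C^2=-4$" guarantees we are in the $s=1$ unnodal-type regime where $\Num(V)$ and the basis $(\delta,f_1,\ldots,f_9)$ behave exactly as in \eqref{delta2}–\eqref{delta22}, so that the class computation is legitimate. Once the class identification is secured, the translation of Petersen incidence into the $(10_3)$ axioms and the appeal to Desargues' Theorem are purely formal.
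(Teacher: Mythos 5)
Your overall route---identify each $U_{ab}$ as the strict transform of a line through three of the ten blown-up nodes, then convert the Petersen incidences into the abstract $(10_3)$ configuration and invoke Desargues' theorem---is the same as the paper's. But the pivotal step, that $\pi(U_{ab})$ is a line, i.e.\ $U_{ab}\cdot e_0=1$, is not actually established by either of the two arguments you offer. First, the fact that $\pi$ restricts to an isomorphism of $U_{ab}$ onto its image says nothing about the degree of that image: the strict transform of a smooth conic through six of the nodes is also mapped isomorphically onto its image, yet has $e_0$-degree $2$. Second, your numerical argument is circular: writing $U_{ab}=de_0-\sum m_ie_i$, the conditions $U_{ab}\cdot K_V=0$ and $U_{ab}^2=-2$ give $\sum m_i=3d$ and $\sum m_i^2=d^2+2$; the value $\sum m_i^2=3$ that you use to "force $d=1$" holds only if $d=1$ already. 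Indeed $K_V^\perp$ contains irreducible $(-2)$-classes with $d=2,3,4$ (conic through six nodes, cubic through eight nodes singular at one of them, quartic through all ten singular at two)---exactly the classes $f_i+f_j+f_k+f_l-\delta$, $f_i+f_j-f_k$, $\delta-2f_i$ listed in the paper---so no bound of this purely numerical kind can single out $d=1$.

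The missing input is the Fano-degree computation the paper uses: by \eqref{delta1} one has $\Delta=\sum U_{ab}$ with $\Delta\cdot U_{ab}=1$, i.e.\ $U_{ab}\cdot\delta=1$; since \eqref{delta2} gives $e_0=\delta+3K_V$ and $U_{ab}\cdot K_V=0$, this yields $U_{ab}\cdot e_0=1$, so the image is a line, and only then does your class computation pin down $U_{ab}=e_0-e_i-e_j-e_k$. (The paper also records $U_{ab}\cdot f_i=1$ for exactly three $i$, equivalently multiplicity one at three of the nodes, and uses the disjointness $U_{ab}\cap C=\emptyset$---the extra node of $H_t$ lies on no edge---to see that the lines meet the sextic $B$ only at its double points.) Once the degree-one statement is in place, your translation of the Petersen adjacency into disjoint versus overlapping node triples and the appeal to Desargues' theorem go through essentially as in the paper.
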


\begin{proof} Let $H$ be the Hessian surface and $X$ be its minimal resolution. The curve $C\in |-2K_V|$ is the exceptional curve of the resolution $X_t\to H_t$ over its unique node that is different from the ten modes $P_{ab}$. We know that the image of the curves $C_i$ in the plane are the irreducible components of the sextic curve $B$. Since the additional double point of $H_t$ does not lie on an edge, we obtain that $U_{ab}\cap C = \emptyset$. Thus the images $\ell_{ab} = \pi(U_{ab})$ of the curves $U_{ab}$ intersect $B$ only at its double points.  Recall that $U_{ab}\cdot f_i  = 1$ for exactly three values of $i$ and zero otherwise. Also it follows from \eqref{delta1} (which is still valid for Coble surfaces) that $U_{ab}\cdot \delta = 1$. Since $3\delta = \sum_{i=1}^{10}f_i$, it follows from \eqref{delta2}, that the class $e_0$ of a line intersects $U_{ab}$ with multiplicity one,  hence $\ell_i$ is a line. 
Since the curves $U_{ab}$ form the Petersen graph from Figure 2, their images form a Desargues configuration of lines. 
\end{proof}

\begin{remark} The beautiful fact that the vertices of a Desargues configuration of lines in the projective plane are double points of a plane curve of degree $6$ is due to J. Thas \cite{Thas}. The moduli space of Coble surfaces of Hessian type is isomorphic to the moduli space of nodal Sylvester non-degenerate cubic surfaces. It contains an open dense subset parameterizing one-nodal cubic surfaces. The six lines passing through the node define an unordered set of six points on the exceptional curve at this point isomorphic to $\bbP^1$, hence they define a hyperelliptic curve of genus $2$. Conversely, a set of six unordered points on $\bbP^1$ define, via the Veronese map,  six points on a conic. Their blow-up is isomorphic to a one-nodal cubic surface. In this way we see that the moduli space of Coble surfaces of Hessian type contains an open dense subset isomorphic to the $3$-dimensional moduli space of genus 2 curves. On the other hand, it is known that the moduli space of Desargues configurations in the plane also contains an open dense subset naturally isomorphic to the moduli space of genus 2 curves \cite{Avritzer}. Thus the  argument from the previous proof gives an algebraic geometrical proof of Thas's result for a general Desargues configuration.
\end{remark}

\subsection{Coble surface $V_{\frac{1}{16}}$}
Assume $t = \frac{1}{16}$. The surface $V_{\frac{1}{16}}$ is the Coble surface associated to the Hessian surface of a cubic surface with six Eckardt points and one ordinary double  point. It is obtained by blowing up 10 double points of an irreducible plane sextic $B$. The nodes are the vertices of a Desargues configuration of 10 lines, the images of the curves $U_{ab}$ in the plane. We also know that the Hessian surface $H_{\frac{1}{16}}$ and hence $V_{\frac{1}{16}}$ admits $\frakS_4$ as its group of automorphisms. This group permutes the curves $U_{ab}$ and hence, considered as a group of birational transformations of the plane, it leaves invariant the Desargues configuration of ten lines. In particular,  it permutes the exceptional curves representing the classes $e_1,\ldots, e_{10}$. It follows from formula \eqref{delta1} that it leaves $\Delta = \sum U_{ab}$ invariant. Appying formula \eqref{delta2}, we see that it leaves invariant the class $e_0$ of a line in the plane. Thus the symmetry group $\frakS_4$ comes from a group of projective transformations leaving invariant the curve $B$. Since no non-identical projective transformation can fix a curve of degree $> 1$ point-wise, we see that $B$ admits $\frakS_4$ as its group of projective automorphisms. According to R. Winger \cite{Winger}, there are two projective equivalence classes of such irreducible rational sextics (\emph{octahedral sextics}). One of them has six cusps among its double points. It cannot be ours since the group $\frakS_4$ permutes the divisor classes $e_i$ and hence has one orbit on the set of double points of the sextic.  So,  our sextic is another  one that is given by parametric equation
$$(x,y,z) = \bigl(u^6-5u^2v^4,uv(v^4-5u^4),v(u^5+v^5)\bigr).$$
Its set of ten nodes contains 6 biflecnodes (i.e. nodes whose branches are inflection points) which are vertices of a complete quadrilateral of lines which together with the remaining four nodes form a \emph{quadrangle-quadrilateral configuration} (see \cite{VY}, Chapter II). Together,  four sides of the quadrilateral and six sides of a quadrangle form a Desargues configuration of lines. This agrees with our partition of the curves $U_{ab}$ into two sets corresponding to indices $a,b\le 4$ and $(a,b) = (a,5)$.

\subsection{Coble surface $V_{\frac{1}{4}}$}
Assume now that $t = \frac{1}{4}$. In this case $K_{V_{\frac{1}{4}}}^\perp$ is generated over $\bbQ$ by the classes of the curves $U_{ab}$ and the classes $C_2-C_1,C_3-C_1,C_4-C_1$. Its rank is equal to $13$. One checks that the hyperplane section $x_a+x_b = 0, 1\le a < b\le 4,$ intersects  the Hessian along the union of the edge $x_a=x_b = 0$ and a line $\ell_{ab} = x_a+x_b = x_c-x_d = 0, \{a,b\}\cap \{k,l\} =\emptyset$ taken with multiplicity 2. The Cremona involution $\Phi_{\st}$ leaves the lines $\ell_{ab}$ invariant. The hyperplane section spanned by the line $\ell_{ab}$ and the edge $x_a = x_b = 0$ is tangent to $H_{1/4}$ along the  edge and the line. The pencil of hyperplanes through the edge defines an elliptic fibration on $X_{\frac{1}{4}}$ with two reducible fibers of type $I_6$, one reducible fiber of type $I_0^*$ and one reducible fiber of type $I_4$. The latter two fibers correspond to the planes $x_a+x_b = 0$ and $x_a-x_b = 0$, respectively. Each plane contains a pair of new singular points. The fiber of  type $I_0^*$  is equal to the divisor $D_1 = 2R_0+R_1+R_2+R_3+R_4$, where $R_0$ is the proper inverse transform of the line $\ell_{ab}$, the curves $R_1,R_2$ are the exceptional curves over the new singular points lying on $\ell_{ab}$ which are pointwisely fixed by $\Phi_{\st}$ and the curves $R_3,R_4$ are the curves $N_{cd}$ and $T_{cd}$. The image of $D_1$ on the Coble surface $V_{\frac{1}{4}}$ is the divisor $4E_0+2E_1+E_2+E_3$, where $E_0^2 = -1, E_1^2 = -2, E_2^2 = E_3^2 = -4$ and 
$E_0\cdot E_i = 1, i= 1,2,3, E_i\cdot E_j = 0, 1\le i < j = 3$. Its image on the relatively minimal elliptic surface is a fiber of Kodaira's type III.

\begin{figure}[ht]
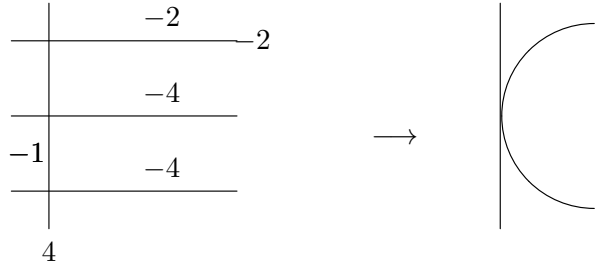

\xy (-40,45)*{};
(0,10)*{};(0,40)*{}**\dir{-};(-5,15)*{};(25,15)*{}**\dir{-};
(-5,25)*{};(25,25)*{}**\dir{-};(60,10)*{};(60,40)*{}**\dir{-};(-5,35)*{};(25,35)*{}**\dir{-};
(72.5,25)*\cir<35pt>{l_r};
(46,22)*{\longrightarrow};
(-3,20)*{-1};(15,38)*{-2};(15,28)*{-4};(15,18)*{-4};(0,7)*{4};
(-3,20)*{-1};(27,35)*{-2};
\endxy
\caption{The image of the fiber of type $I_0^*$ on $V_{\frac{1}{4}}$}
\label{fig4}
\end{figure}

The  fiber of type $I_4$ is a divisor $D_2 = A_1+A_2+A_3+A_4$ forming a quadrangle of $(-2)$-curves. Two opposite sides  $A_1,A_2$ are the proper transforms of the components of the residual cubic curve. The curves $A_3,A_4$ are the exceptional curves over two new singular points. The image of $D_2$ in $V_{\frac{1}{4}}$ is a quadrangle of four smooth rational curves as pictured on  Figure \ref{fig5}. Its image on the relatively minimal elliptic surface is a fiber of  type $I_2$.

\begin{figure}[ht]
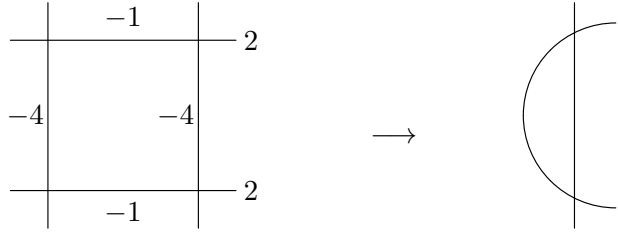

\xy (-40,45)*{};
(0,10)*{};(0,40)*{}**\dir{-};(-5,15)*{};(25,15)*{}**\dir{-};
(70,10)*{};(70,40)*{}**\dir{-};(-5,35)*{};(25,35)*{}**\dir{-};
(75.5,25)*\cir<35pt>{l_r};
(46,22)*{\longrightarrow};(20,10)*{};(20,40)*{}**\dir{-};
(-3,25)*{-4};(10,38)*{-1};(17,25)*{-4};(10,12)*{-1};(27,15)*{2};(27,35)*{2};
\endxy
\caption{The image of the fiber of type $I_4$ on $V_{\frac{1}{4}}$}
\label{fig5}
\end{figure}
Let $f:V_{\frac{1}{4}}\to S$ be the blowing down of the curves $E_0,E_1$ and two $(-1)$-curves in the second fiber. The image of the elliptic pencil on $S$ defines a relatively minimal elliptic fibration on the surface $S$. It has reducible fibers of Kodaira's types $I_6,I_2$ and III.

Let us  give an explicit construction of $V_{\frac{1}{4}}$ as the blow-up of $13$ points in the plane. Consider the union of two lines $\ell_1$ and $\ell_2$ and two conics $K_1$ and $K_2$ satisfying the following properties
\begin{itemize}
\item[(i)] The lines $\ell_1, \ell_2$ are tangent to the conic $K_1$ at points $p_1,p_2$;
\item[(ii)] $K_1$ is tangent to $K_2$ at two points $p_3,p_4$;
\item[(iii)] the lines $\la p_1,p_3\ra$ and $\la p_1,p_4\ra$ through the points $p_1,p_3$ and through the points $p_1,p_4$ intersect $\ell_2$ at the points $p_5,p_6$ lying on the line $\ell_2$.
\end{itemize}

This configuration of lines exists and is unique up to projective equivalence. In fact, let us choose the projective coordinates such that the points $p_1,p_3,p_4$ are the points $[1,0,0], [0,1,0]$ and $[0,0,1]$. Then, after scaling the coordinates we can write the equation of $K_1$ in the form $xy+yz+xz = 0$ and the equation of $K_2$ in the form $xy+xz+yz+ax^2 = 0$.  The equation of $\ell_1$ becomes $y+z = 0$. The lines $\la p_1,p_3\ra$ and $\la p_1,p_4\ra$ become the lines $y = 0$ and $z = 0$. They intersect $K_2$ at the points $[1,-a,0]$ and $[1,0,-a]$. The  line through these points has the equation $ax+y+z = 0$. The condition that this line is tangent   to $K_1$ at some point $p_2$ is $a = 4$. This makes $\ell_2$  and $K_2$ unique. So the union of the two lines and the two conics is given by the equation
$$B:= (y+z)((y+z+4x)(xy+yz+xz)(xy+yz+xz+4x^2) = 0.$$
To get the Coble surface, we first blow-up the points  $p_0 = \ell_1\cap \ell_2, p_1,p_3,p_4,p_5,p_6$ and infinitely near points 
$p_1',p_3',p_4'$ to $p_1,p_3,p_4$ corresponding to the tangent directions of tangent lines at the conics. They are the base points of the pencil of cubics spanned by $\ell_1+K_2$ and $\ell_2+K_1$. The corresponding minimal elliptic surface contains three reducible fibers of types III coming from $\ell_2+K_1$, of type $I_2$ coming from $\ell_1+K_2$ and a fiber of type $I_6$ coming from the coordinate lines $x = 0, y= 0, z= 0$. Then we blow-up the singular points of the first two fibers as in above to get the Coble surface.

\subsection{Dynamical degrees of automorphisms of $V_{\frac{1}{4}}$ and $V_{\frac{1}{16}}$}
Let $G$ be as before, the group generated by  the projection involutions  $h_{ab}$. Obviously, the sublattice $N$ of $\Num(V_{\frac{1}{4}})$ generated by the curves $C_1,\ldots,C_4$ is invariant with respect to $\Aut(V_{\frac{1}{4}})$. The group $G$ acts on it by permuting the classes of $C_i$. Thus, all eigenvectors of elements  $g^*, g\in G$ with real eigenvalue $\lambda > 1$ are contained in  $L = (N^\perp)_\bbR$. Since the sublattice generated by the curves $U_{ab}$ is contained in $L$, we can compute the spectral radii of elements $g$ in the basis formed by these curves. It is obvious that the action on these curves has not changed comparing to $V_t$ with general $t$. 

To sum up, we have proved the following.

\begin{thm} Let $(H_t)_{t\ne 0}$ be the family of Hessian surfaces given by equation \eqref{hess}. Let 
$(S_t')_{t\ne 0}$ be the family of surfaces obtained as the quotients $H_t$ by the Cremona involution $\Phi_{\st}$. 
For $t\ne \frac{1}{4},\frac{1}{16}$, a minimal smooth model $S_t$ of $S_t'$ is an Enriques surface. The surface $S_1$ is an Enriques surface with the automorphism group isomorphic to $\frakS_5$ (see  subsection 4.5). For $t = \frac{1}{4},\frac{1}{16}$, a minimal resolution of singularities of   $S_t'$ is a Coble surface $S_t$. For any $t\ne 1$,  the group $G_t$ of automorphisms of $H_t$ generated by the deck transformations of the projections from  the nodes of $H_t$ is isomorphic to the semi-direct product $G = \UC(4)\rtimes \frakS_4$, where $\UC(4)$ is the free product of four groups of order $2$.\footnote{It is proved on \cite{MO}, Theorem 1 that this group is the whole group of  automorphisms of the surface.}  The group $G_t$ commutes with the Cremona involution $\Phi_{\st}$ and descends to a subgroup $\bar{G}_t$ of $\Aut(S_t)$. For any 
$g\in \bar{G_t}$, the spectral radius of $g^*$ acting on $\Num(S_t)_\bbR$ is independent of  $t\ne 1$.
\end{thm}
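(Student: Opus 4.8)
The plan is to read off the structural parts of the theorem from results already proved in this section and then to establish the genuinely new assertion — that the spectral radius of $g^*$ is independent of $t$ — by exhibiting a $\bar G_t$-invariant copy of (a finite-index sublattice of) $\sfE_{10}$ on which the action of $\bar G_t$ is purely combinatorial, hence $t$-free. For the dichotomy I would first recall the fixed-locus computation for $\Phi_{\st}$ on the pencil $H_t$: a fixed point satisfies $tx_5^2=1$ and $x_i^2=1$ for $i\le 4$, so $x_5=-(x_1+\cdots+x_4)\in\{0,\pm2,\pm4\}$, whence $\Phi_{\st}$ is fixed-point-free on $H_t$ unless $t=\tfrac14$ or $t=\tfrac1{16}$, in which cases its only fixed points are the extra nodes of $H_t$. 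Thus for $t\ne\tfrac14,\tfrac1{16}$ the quotient $S_t'$ has only Du~Val singularities and its minimal resolution $S_t$ is the free quotient of a K3 surface, i.e. an Enriques surface; for $t=\tfrac14,\tfrac1{16}$ the minimal resolution acquires, over the images of the fixed nodes, the configuration forcing $|-K|=\emptyset\ne|-2K|$, so $S_t$ is a Coble surface of K3 type with $K_{S_t}^2$ equal to $-1$ and $-13$ respectively, by the number of new fixed nodes. The identification of $S_1$ with the Enriques quotient of the Hessian of the Clebsch diagonal surface, and $\Aut(S_1)\cong\frakS_5$, is exactly subsection~4.5.

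\emph{The group $G_t$ and its descent.} Under the subgroup $\frakS_4$ permuting $x_1,\dots,x_4$, the ten deck involutions $\sigma_{ab}$ of $X_t$ fall into the orbit $\{\sigma_{ab}:a,b\le 4\}$ of size six and the orbit $\{\sigma_{i5}:i\le 4\}$ of size four. Since $\lambda_1=\cdots=\lambda_4=1$ while $\lambda_5=1/t$, Lemma~\ref{eckardt} shows the six nodes $P_{ab}$, $a,b\le 4$, are Eckardt points, whereas for $t\ne 1$ the four nodes $P_{i5}$ are not; hence by Lemma~\ref{action2} and Corollary~\ref{action}, $\sigma_{ab}$ with $a,b\le 4$ acts through the transposition $(ab)$ and these six involutions generate a copy of $\frakS_4$ in $\Aut(X_t)$, while the four $\sigma_{i5}$ are genuinely new involutions. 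Invoking \cite{MO}, Theorem~1, the $\sigma_{i5}$ generate the free product $\UC(4)$ of four groups of order two, on which the coordinate $\frakS_4$ acts by permuting the free factors, giving $G_t\cong\UC(4)\rtimes\frakS_4$. By Corollary~\ref{action} each $\sigma_{ab}$ commutes with $\Phi_{\st}$, so $G_t$ does, and therefore $G_t$ descends to a subgroup $\bar G_t$ of $\Aut(S_t')$ and, by uniqueness of the minimal resolution, of $\Aut(S_t)$.

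\emph{Independence of the spectral radius.} This is the heart of the theorem. For every $t\ne 1$ the classes $U_{ab}$ on $S_t$ (the images of $N_{ab}+T_{ab}$) realise the Petersen intersection graph, so their Gram matrix is $-2I+A$ with $A$ the Petersen adjacency matrix; since the eigenvalues of $A$ are $3,1^{(5)},(-2)^{(4)}$, this matrix is nonsingular of signature $(1,9)$. Hence the $U_{ab}$ span a rank-$10$ sublattice $L_t\subset\Num(S_t)$ with $(L_t)_\bbR$ of signature $(1,9)$, and its orthogonal complement in $\Num(S_t)_\bbR$ is negative definite. By Corollary~\ref{action} and Lemma~\ref{eckardt}, each generator $h_{ab}^*$ of $\bar G_t$ acts on the basis $(U_{cd})$ as the transposition $t_{ab}$ when $a,b\le 4$, and as $r_{\alpha_{ab}}\circ t_{ab}$ with $\alpha_{ab}=f_{ab}-U_{ab}$ when $b=5$; using that the $I_6$ fibre of $|2F_{ab}|$ has class $2f_{ab}=\sum_{\#\{a,b\}\cap\{c,d\}=1}U_{cd}$, one gets $\alpha_{ab}\in(L_t)_\bbQ$, so $r_{\alpha_{ab}}$ sends $U_{ab}\mapsto 2f_{ab}-U_{ab}\in L_t$ and fixes the other $U_{cd}$, whence $h_{ab}^*$ preserves $L_t$ and its matrix there is an integer matrix depending only on the Petersen/Desargues combinatorics — the \emph{same} matrix $M_{h_{ab}}$ for all $t\ne 1$. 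Since $\bar G_t$ preserves both $L_t$ and $\Num(S_t)$, it acts on the negative-definite lattice $L_t^\perp\cap\Num(S_t)$ through a finite group, so every eigenvector of any $g^*$, $g\in\bar G_t$, with eigenvalue $>1$ lies in $(L_t)_\bbR$. Consequently the spectral radius of $g^*$ on $\Num(S_t)_\bbR$ equals that of the $t$-independent matrix $M_g$ obtained by multiplying the $M_{h_{ab}}$ along the word $g$, which proves the claim.

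\emph{The main obstacle.} The only non-bookkeeping step is the identification of the group generated by the four non-Eckardt involutions with the \emph{free} product $\UC(4)$: a surjection $\UC(4)\twoheadrightarrow\langle\sigma_{15},\dots,\sigma_{45}\rangle$ is immediate, but its injectivity — that no nontrivial reduced word in the $\sigma_{i5}$ is the identity — is the substantive point, and is precisely what \cite{MO}, Theorem~1 supplies; after that, the descent of $\Phi_{\st}$-equivariant automorphisms through the possibly singular, non-minimal model $S_t'$ to the Coble surface $S_t$ is routine but must be stated with care.
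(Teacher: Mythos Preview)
Your proposal is correct and matches the paper's approach: the theorem is presented there as a summary (``To sum up, we have proved the following''), with the ingredients already established in Sections~4 and~5, and your assembly of those ingredients is accurate. Your argument for the spectral-radius independence is in fact a bit cleaner than the paper's: you verify directly, via the spectrum $\{3,1^{(5)},(-2)^{(4)}\}$ of the Petersen adjacency matrix, that the $U_{ab}$ span a signature-$(1,9)$ sublattice whose orthogonal complement is negative definite, which handles all $t$ uniformly; the paper only spells out the case $t=\tfrac14$, arguing instead that the complementary lattice $N=\langle C_1,\dots,C_4\rangle$ is merely permuted by $G$. One minor slip: the phrase ``by the number of new fixed nodes'' does not justify $K_{S_{1/4}}^2=-13$, since $H_{1/4}$ has only four extra nodes; the value $-13$ quoted from the paper comes from a finer count of the blow-ups needed to resolve the quotient, not directly from the number of fixed nodes, so you should drop that clause.
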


\begin{remark} Note that there is no smooth family $f:\calS\to T$ of surfaces which include both Enriques and Coble surfaces. One of the reasons is that their Euler-Poincar\'e characteristics are different. However, there exists a smooth family $f:\calX\to T$ of K3-surfaces admitting an involution $\tau\in \Aut(\calX/T)$ such that the quotient of a general fiber is an Enriques surface and the quotient of special fibers are Coble surfaces. In this case, the locus of fixed points of $\tau$ is of relative dimension one and the quotient total family $\calX/(\tau)$ is singular along this locus. So, the family $\bar{\calX} = \calX/(\tau)\to T$ has smooth fibers but not a flat (and hence not smooth) family. By blowing up the singular locus we obtain a flat family with special fibers isomorphic to the union of a copy of a Coble surface and $s$ copies of minimal ruled surfaces (a \emph{flower pot} degeneration of Enriques surfaces, see \cite{Morrison}).\end{remark}

\section{Questions and comments}
One of the goals of our computer experiments was to find a hyperbolic automorphism $g$ of an Enriques surface of  small as possible dynamical degree $\lambda(g)$. It follows from \cite{Oguiso} that the first four smallest Salem numbers of degree $10$ cannot be realized by an automorphism of an Enriques surface. His proof can be used to eliminate such numbers as possible dynamical degrees of automorphisms of a Coble surface.  So, the natural question is the following.

\begin{question} What is the smallest Salem number larger than one of given degree $d \ge 4$  realized by an automorphism of an Enriques surface or a Coble surface?
\end{question}

Obviously, the possible degree does not exceed 10 for an Enriques surface. The same is true for a Coble surface since we can restrict  the action of an automorphism to  the sublattice of rank 10 orthogonal to the classes of all irreducible component of the effective anti-bicanonical curve. 

The smallest Salem numbers  in our computations can be found in  Table \ref{table2}. Note that we had realized the second smallest Salem number of degree 2 and the smallest number in degree $4$.

I am sure that these are not optimal results. Note that any Salem number that can be realized as the spectral radius of an element of a Coxeter group with the $T$-shaped Coxeter diagram of type $T_{2,3,n}, n \ge 7$, occurs as the dynamical degree of an automorphism of a rational surface \cite{Uehara}. However, the group of automorphisms of the rational surface realizing these numbers for $n > 7$ is expected to be small, in most cases, an infinite cyclic group.  

Let $f:\calX\to T$ be a smooth family of projective surfaces over an  integral scheme $T$ of finite type  over $\bbk$. We say that it is a complete smooth family if it cannot be extended to a smooth family $\calX'\to C'$ where $C$ is a proper open subset of an integral scheme $C'$. Suppose $G$ is a group of  automorphisms of $\calX/T$ and let $g_\eta$ be the restriction of $g\in G$ to the generic geometric fiber of $f$.  Assume that $g_\eta$ is hyperbolic, i.e. $\lambda(g_\eta) > 1$. Let  $U = \{t\in T:\lambda(g_t)< \lambda(g_\eta)\}$. We say that the family as above is \emph{equi-hyperbolic} if for any $t\in U$ we have $\lambda(g_t) = 1$. In subsection 4.5 we have introduced a family of Enriques surfaces over an algebraically closed field $\bbk$ of characteristic $p\ne 2,3,5$ parameterized by $T = \bbP^1\setminus \{0,\infty,\frac{1}{4},\frac{1}{16}\}$ together with a group of automorphisms $G$ isomorphic to $\UC(4)\rtimes \frakS_4$ such that for all $t\in T, t\ne 1$, any hyperbolic element $g\in G$ restricts to a hyperbolic automorphism $g_t$ whose dynamical degree does not depend on $t$ and $\lambda(g_1) = 1$. Applying the theory of degenerations of K3 and Enriques surfaces, one can show that the family is a smooth complete family.  This is an example of a equi-hyperbolic family of automorphisms of Enriques surfaces. By taking the K3-cover we get an example of an equi-hyperbolic family of automorphisms of K3 surfaces. To exclude the trivial cases of  constant or isotrivial families, we assume additionally, that $U\ne \emptyset$.  

\begin{question} What is the largest possible dimension of a equi-hyperbolic family of automorphisms of algebraic surfaces.
\end{question}

 So far, we have considered the dynamical degree of a single automorphism of an algebraic surface $X$. One may also look at a discrete infinite group $G$ of automorphisms. We identify $G$ with its image $\Gamma$ in the group of isometries of the hyperbolic space $\bbH^n$ associated to a real linear quadratic space $V$ of dimension $n+1$ of signature $(1,n)$ (in our case  $V = \Num(X)_\bbR$). It is a discrete group of isometries of $\bbH^n$. For any  point $x\in \bbH^n$ one defines the Poincar\'e series 
 $$P_s(x,x) = \sum_{\gamma\in \Gamma}e^{-sd(\gamma(x),x)},$$
 where $d(x,y)$ is the hyperbolic distance between the points $x,y$. The \emph{critical exponent} $\delta(\Gamma)$ is defined to be the infimum of the set $\{s\in \bbR:P_s(x,y) < \infty\}$. We have $\cosh d(x,y) = (x,y)$, where $(x,y)$ is the inner product in the Minkowski space $V$.  Suppose that $\Gamma$ is the cyclic group generated by $g^*$, where $g$ is a hyperbolic automorphism of dynamical degree $\lambda(g) > 1$.  If we take $x\in \bbH^n$ to represent the numerical class $h$ of an ample divisor, then  
 $$P_s(h,h) = \sum_{n\ge 0}e^{-s\cosh^{-1}((g^*)^n(h),h))} \sim  \sum_{n\in \bbZ}\frac{1}{(g^*)^n(h),h)^s},$$
 and since $ \lambda(g) =\lim_{n\to \infty}((g^*)^n(h),h))^{1/n} > 1$, the series converges for all $s > 0$. In particular, the critical exponent is equal to zero. On the other hand, if $\Gamma$ is geometrically finite (one of the equivalent definitions of this is that $\Gamma$ admits a fundamental polyhedron with finitely many sides), and non-elementary (i.e. does not contain a subgroup of finite index isomorphic to an abelian group of finite rank) the critical exponent is positive and coincides with the \emph{Hausdorff dimension} of the limit set $\Lambda(\Gamma)$  and the Poincar\'e series diverges at $\delta_\Gamma$ \cite{Sullivan}. 
 
For any  positive real $r$  and any ample class $h$ on $X$, set 
$$N_{X,h}(r) = \#\{g\in G:(g^*(h), h) \le r\}.$$
One can show that in the case when $\Gamma$ is non-elementary geometrically finite discrete group, there exists an asymptotic expansion of this function of $r$ whose leading term is 
equal to $c_{\Gamma,h}r^{\delta_{\Gamma}}$ (see \cite{Dolgachev2}).

 It is known that $0< \delta_\Gamma \le n-1$ for non-elementary geometrically discrete  group $\Gamma$ of isometries of $\bbH^n$ and the equality $\delta_\Gamma = n-1$ holds if and only if $\Gamma$ is of finite covolume. It follows from the Global Torelli Theorem for K3 surfaces that the group $\Aut(X)$ of automorphisms $G$ of a complex algebraic K3 surface 
 $X$ is  finitely generated (see \cite{Sterk}) and it is of finite covolume if and only if $X$ does not contain smooth rational curves. In fact, the proof shows that the subgroup $G$  of the orthogonal group $\Or(\Num(X))$ generated 
 by $\Aut(X)^*:= \{g^*,g\in \Aut(X)\}$ and reflections in $(-2)$-curves is of finite index. It follows that 
 $\Aut(X)^*$ is always a geometrically finite discrete group of isometries of the hyperbolic space (one finds a fundamental polyhedron with finitely many sides by throwing away the faces in the fundamental domain of $G$ corresponding to reflections in $(-2)$-curve).\footnote{This remark is due to V. Nikulin.} Thus, if $G$ is non-elementary and $X$ acquires a smooth rational curve,  we have $0<\delta_{\Gamma}<\rho-2$, where $\rho = \rank \Num(X)$. In particular, $\delta_\Gamma$ drops when $X$ acquires a smooth rational curve.

\begin{question} In notation of Theorem \ref{xie}, where we replace $g$ by a non-elementary group $\Gamma$, what is the behavior of the   function 
 $s\mapsto \delta_{\Gamma_s}$?  
\end{question}

The Hausdorff dimension $\delta_\Gamma$ is notoriously difficult to compute, and numerical computations are known only in a few cases (see \cite{Dolgachev2}).

\begin{question} What is $\delta_\Gamma$ for the group of automorphisms of a K3, an Enriques surface or a Coble surface of Hessian type generated by the projection involutions?
\end{question}

\end{document}